\newtheorem{theorem}{Theorem}[section]
\newtheorem{lemma}[theorem]{Lemma}
\newtheorem{remark}[theorem]{Remark}
\newcommand{\Sym}{\mathrm{Sym}}
\newcommand{\Skew}{\mathrm{Skew}}
\newcommand{\SO}{\operatorname{SO}(3)}
\newcommand{\MI}{{\mathbb I}}
\newcommand{\dimu}{{k}}
\newcommand{\coV}{{k_{\rm e}}}
\title{\LARGE \bf
Controller Design for Systems on Manifolds in Euclidean Space
}
\author{Dong Eui Chang
\thanks{D.E. Chang is with the school of electrical engineering, KAIST, Korea and the department of applied mathematics, University of Waterloo, Canada.
        {\tt\small dechang@kaist.ac.kr}. This research has been in part supported by KAIST  under grant G04170001 and  by the ICT R\&D program of MSIP/IITP [2016-0-00563, Research on Adaptive Machine Learning Technology Development for Intelligent Autonomous Digital Companion]. This paper is presented at 56th IEEE Conference on Decision and Control, Melbourne, Australia, December 2017.}
}
\begin{document}

\maketitle
\thispagestyle{empty}
\pagestyle{empty}

\begin{abstract}
Given a control system on a manifold that is embedded in Euclidean space, it is sometimes convenient to use a single global coordinate system in the ambient Euclidean space for controller design  rather than to use multiple local charts on the manifold or  coordinate-free tools from differential geometry. In this paper, we develop a theory about this and apply it to the fully actuated rigid body system for stabilization and tracking. A noteworthy  point in this theory is that   we legitimately modify the system dynamics outside its state-space manifold before controller design so as to add  attractiveness  to the manifold in the resulting  dynamics.

\end{abstract}

\section{Introduction}
Quite a few control systems are defined on manifolds that are not homeomorphic to Euclidean space, where we  use the phrase `Euclidean space' to mean some $\mathbb R^n$ space, not imposing any metric on it. The geometric, or coordinate-free, approach has been developed to deal with such systems without being dependent on the choice of coordinates. In many  cases, however, a state-space manifold appears as an embedded manifold in Euclidean space and the control system naturally extends from the manifold to its ambient Euclidean space: one example is the free rigid body system on $\SO \times \mathbb R^3$ which  naturally extends to  $\mathbb R^{3\times 3} \times \mathbb R^3$.  In such a case, it might be advantageous to use one single global Cartesian coordinate system in the ambient Euclidean space in designing controllers for the system on the manifold, thus eliminating the necessity of using multiple local charts  or rather complex tools from differential geometry. 
When the state-space manifold, say $M$,  is a leaf of a foliation of invariant manifolds of the extended, or ambient, system, we can legitimately modify the ambient system dynamics outside $M$ to add attractiveness to $M$ while preserving the dynamics on $M$;  design controllers for the modified ambient system in the ambient Euclidean space; and then apply  the resultant controllers to the original system on $M$. In this paper, we showcase this program in combination with the linearization technique; the usual Jacobian linearization is carried out on  the ambient system  to come up with  stabilizing or tracking controllers for the original system on the manifold. The free rigid body system is used here to illustrate every step of this program. We note that  the program of  using ambient Euclidean space was successfully employed in providing a simple proof of  the Pontryagin Maximum Principle on manifolds \cite{Ch11} and creating feedback integrators for structure-preserving numerical integration \cite{ChJiPe16}. 

\section{Main Results}
\subsection{Notation and Some Mathematical Facts}
The usual Euclidean inner product is exclusively used for vectors and matrices in this paper, i.e.
\[
\langle A, B \rangle = \sum_{i,j}A_{ij}B_{ij} = \operatorname{tr}(A^TB)
\]
for any two matrices  of equal size.  The norm induced from this inner product, which is called the Frobenius or Euclidean norm, is exclusively used for vectors and matrices. Let $\Sym$ and $\Skew$ be the symmetriztaion operator and the skew-symmetrization operator, respectively, on square matrices, which are defined by
\[
\Sym (A) = \frac{1}{2}(A+A^T), \quad \Skew (A) = \frac{1}{2}(A-A^T)
\]
for any square matrix $A$.
Then,
\[
A  = \Sym (A) + \Skew(A), \quad \langle \Sym (A), \Skew (A) \rangle = 0.
\]
 Namely, 
\[
\mathbb R^{n\times n} = \Sym (\mathbb R^{n\times n} ) \oplus \Skew (\mathbb R^{n\times n} )
\]
with respect to the Euclidean inner product. Let $[\, , ]$ denote the usual matrix commutator that is defined by $[A,B] = AB-BA$ for any pair of square matrices of equal size. 
It is easy to show that
\begin{align*}
[\Sym (\mathbb R^{n\times n} ), \Sym (\mathbb R^{n\times n} )] &\subset \Skew (\mathbb R^{n\times n} ),\\
[\Sym (\mathbb R^{n\times n} ), \Skew (\mathbb R^{n\times n} )] &\subset \Sym (\mathbb R^{n\times n} ),\\
[\Skew (\mathbb R^{n\times n} ), \Skew (\mathbb R^{n\times n} )] &\subset \Skew (\mathbb R^{n\times n} ).
\end{align*}
In other words,  $[A,B] = -[A,B]^T$  for any $A = A^T \in \mathbb R^{n\times n}$ and $B = B^T \in \mathbb R^{n\times n}$; $[A,C] = [A,C]^T$ for any  $A = A^T \in \mathbb R^{n\times n}$ and $C = -C^T \in \mathbb R^{n\times n}$; and $[B,C] = -[B,C]^T$   for any  $B = -B^T \in \mathbb R^{n\times n}$ and  $C = -C^T \in \mathbb R^{n\times n}$. 

Let $\SO$ denote the set of all $3\times 3$ rotation matrices, which is defined as $\SO = \{ R\in \mathbb R^{3\times 3} \mid R^T R - I= 0, \det R>0\}$.  Let $\mathfrak{so}(3)$ denote the set of all $3\times 3$ skew symmetric matrices, which is defined as $\mathfrak{so}(3) = \{ A \in \mathbb R^{3\times 3} \mid A^T+ A = 0 \}$. The hat map $\wedge : \mathbb R^3 \rightarrow \mathfrak{so}(3)$ is defined by
\[
\hat \Omega = \begin{bmatrix}
0 & -\Omega_3 &\Omega_2 \\
\Omega_3 & 0 & -\Omega_1\\
-\Omega_2 & \Omega_1 & 0
\end{bmatrix}
\]
for $\Omega = (\Omega_1, \Omega_2,\Omega_3) \in \mathbb R^3$. The inverse map of the hat map is called the vee map and denoted by $\vee$ such that $(\hat \Omega)^\vee = \Omega$ for all $\Omega \in \mathbb R^3$ and $(A^\vee)^\wedge = A$ for all $A\in \mathfrak{so}(3)$.  

\begin{lemma}\label{lemma:prelim}
1. $\langle R A, R B\rangle = \langle AR, BR\rangle  = \langle A, B\rangle$   for any $R \in \SO$ and $A,B \in \mathbb R^{3\times 3}$.




2. $\max_{R_1, R_2 \in \SO}\| R_1 - R_2\| = 2\sqrt 2$. 

3. $\langle \hat u, \hat v \rangle = 2 \langle u, v\rangle$ for any $u, v \in \mathbb R^3$.

4. $[\hat u, \hat v]= (u \times v)^\wedge$  and $\hat u v = u \times v$ for any $u, v \in \mathbb R^3$.
\end{lemma}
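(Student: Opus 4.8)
The plan is to prove each of the four parts by direct computation using the definition of the Euclidean (Frobenius) inner product, $\langle A,B\rangle = \operatorname{tr}(A^TB)$, together with the orthogonality property $R^TR = I$ for $R\in\SO$ and elementary facts about the hat map. I would treat the four items essentially independently, since they rely on different computations, though parts 1 and 2 are clearly linked.

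For part 1, I would expand using the trace formula: $\langle RA, RB\rangle = \operatorname{tr}((RA)^T(RB)) = \operatorname{tr}(A^TR^TRB) = \operatorname{tr}(A^TB) = \langle A,B\rangle$, invoking $R^TR = I$. The right-invariance $\langle AR, BR\rangle = \langle A,B\rangle$ follows symmetrically, using the cyclic property of the trace to write $\operatorname{tr}(R^TB^TAR) = \operatorname{tr}(B^TARR^T) = \operatorname{tr}(B^TA)$, now invoking $RR^T = I$. For part 2, I would compute $\|R_1 - R_2\|^2 = \langle R_1-R_2, R_1-R_2\rangle = \|R_1\|^2 + \|R_2\|^2 - 2\langle R_1,R_2\rangle$. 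Since each $R_i\in\SO$ has orthonormal columns, $\|R_i\|^2 = \operatorname{tr}(R_i^TR_i) = \operatorname{tr}(I) = 3$, so the expression equals $6 - 2\langle R_1,R_2\rangle$. Thus maximizing the distance amounts to minimizing $\langle R_1,R_2\rangle = \operatorname{tr}(R_1^TR_2)$. Writing $Q = R_1^TR_2\in\SO$, the problem reduces to showing $\min_{Q\in\SO}\operatorname{tr}(Q) = -1$, whence the maximum equals $\sqrt{6-2(-1)} = 2\sqrt2$.

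The main obstacle, and the only step that is not a one-line manipulation, is establishing $\min_{Q\in\SO}\operatorname{tr}(Q) = -1$ for part 2. For this I would use the fact that every $Q\in\SO$ has eigenvalues $1, e^{i\theta}, e^{-i\theta}$, so $\operatorname{tr}(Q) = 1 + 2\cos\theta$, which attains its minimum $-1$ at $\theta = \pi$; this minimum is realized, for instance, by $Q = \operatorname{diag}(1,-1,-1)$, confirming the bound $2\sqrt2$ is achieved. Alternatively, one can avoid spectral theory by noting that $\operatorname{tr}(Q) = \langle I, Q\rangle \ge -\|I\|\,\|Q\| = -3$ is too weak, so the eigenvalue argument (or the axis--angle parametrization via Rodrigues' formula) is the natural route.

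For parts 3 and 4, I would simply carry out the matrix arithmetic. For part 3, writing out $\hat u$ and $\hat v$ explicitly and summing the entrywise products gives $\langle \hat u,\hat v\rangle = \operatorname{tr}(\hat u^T\hat v) = 2(u_1v_1 + u_2v_2 + u_3v_3) = 2\langle u,v\rangle$, the factor of $2$ arising because each component $u_i$ appears twice (with opposite signs) in the off-diagonal entries of $\hat u$. For part 4, the identity $\hat u\, v = u\times v$ is immediate from the definition of the hat map, as the matrix $\hat u$ is precisely the one representing the cross product $u\times(\cdot)$. The commutator identity $[\hat u,\hat v] = (u\times v)^\wedge$ then follows either by direct computation of $\hat u\hat v - \hat v\hat u$ and comparison with $(u\times v)^\wedge$, or more slickly from the Jacobi identity for the cross product: for any $w$, $(\hat u\hat v - \hat v\hat u)w = u\times(v\times w) - v\times(u\times w) = (u\times v)\times w = (u\times v)^\wedge w$, where the middle equality is the standard $\mathrm{BAC}$--$\mathrm{CAB}$ expansion; since this holds for all $w$, the matrix identity follows.
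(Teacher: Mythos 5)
Your proof is correct in all four parts: the trace manipulations with $R^TR=RR^T=I$ for part 1, the reduction of part 2 to $\min_{Q\in\SO}\operatorname{tr}(Q)=-1$ via the eigenvalues $1,e^{\pm i\theta}$ (with the bound attained at a rotation by $\pi$), the entrywise computation giving the factor of $2$ in part 3, and the Jacobi/BAC--CAB argument for the commutator identity in part 4 are all sound. The paper states this lemma without any proof, so there is no authorial argument to compare against; your computations are exactly the standard ones the author evidently had in mind.
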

 Every function and manifold is assumed to be smooth in this paper unless stated otherwise. Stability, stabilization and tracking are all understood to be local unless globality is stated explicitly. 

\subsection{The Setup}
Consider a control system $\Sigma$ on $\mathbb R^n$
\[
\Sigma: \quad \dot x = X(x,u), \quad x \in \mathbb R^n, u \in \mathbb R^\dimu.
\]
Assume that there is an $m$-dimensional regular submanifold $M$ of $\mathbb R^n$ that is invariant under the flow of the system $\Sigma$. 
 By  the invariance of $M$, we can restrict the system $\Sigma$ to $M$ and denote the restricted system by $\Sigma | M$ as follows:
\begin{equation}\label{Sigma:M}
\Sigma | M: \quad \dot x = X(x,u), \quad x \in  M, u \in \mathbb R^\dimu.
\end{equation}
For convenience, we will call the system $\Sigma$ an ambient system of $\Sigma | M$. Any control system on $\mathbb R^n$ whose restriction to $M$ coincides with $\Sigma | M$ shall be also called an ambient system of $\Sigma |M$.

A control system is often defined on  a manifold   and  given in a form embedded in Euclidean space as above. Hence, it will be convenient to use the ambient control system $\Sigma$  and the Cartesian coordinates on the ambient space $\mathbb R^n$   in order to design controllers for the system $\Sigma | M$ on the manifold $M$, which may  free us from using multiple local charts or  difficult tools from differential geometry.  For example, the bracket operations on control vector fields related to the control system $\Sigma | M$ can be carried out in Cartesian coordinates in $\mathbb R^n$ since those vector fields can be regarded as ones in $\mathbb R^n$ and the bracket operation is closed in the tangent bundle $TM$ of the manifold $M$.  Optimal control problems on $M$ can be also solved in the ambient space $\mathbb R^n$  about which we  refer the reader to \cite{Ch11}.

Since we are interested in the system $\Sigma | M$ on $M$, it is acceptable to modify its ambient system  $\Sigma$ outside $\Sigma | M$ while preserving the dynamics on $M$. Suppose that there is a non-negative function $\tilde V$ on $\mathbb R^n$ such that $M = \tilde V^{-1}(0)$. A natural candidate for $\tilde V$ would be $\tilde V(x) = \frac{1}{2}f(x)^TSf(x)$, where $f: \mathbb R^n \rightarrow \mathbb R^q$ with $q \geq n -\dim M$ is a function such that $M = f^{-1}(0)$, and $S$ is a $q \times q$ positive definite symmetric matrix. Since the function $\tilde V$ attains its minimum value $0$ at every point in  $M$, 
\begin{equation}\label{nabla:V:0}
\nabla \tilde V(x) = 0, \quad \forall x \in M.
\end{equation}
Subtract $\nabla \tilde V$ from the control vector field of $\Sigma$ to obtain the following new ambient control system
\begin{align}\label{Sigma:tilde}
\tilde \Sigma: \quad \dot x& = \tilde X(x,u), \quad x \in \mathbb R^n, u \in \mathbb R^\dimu,
\end{align}
where
\[
\tilde X(x,u) =X(x,u) -\nabla \tilde V(x).
\]
By \eqref{nabla:V:0}, the two systems $\Sigma$ and $\tilde \Sigma$ coincide on $M$, i.e. $\Sigma | M = \tilde \Sigma | M$, so we will mainly use $\tilde \Sigma$ in place of $\Sigma$ as the ambient system of $\Sigma | M$.  The role of the term $-\nabla \tilde V$ is to help making $M$  attractive in the dynamics of $\tilde \Sigma$. We refer the reader   to \cite{ChJiPe16} for conditions for attractiveness of $M$  in the dynamics of $\tilde \Sigma$.

As an example throughout the paper, we use the following  free rigid body system with full actuation given by
\begin{subequations}\label{rigid:original}
\begin{align}
\dot R &= R\hat \Omega, \\
\dot \Omega &= \MI^{-1} ( \MI \Omega \times \Omega) + \MI^{-1} \tau,
\end{align}
\end{subequations}
where $(R,\Omega) \in \SO \times  \mathbb R^3 \subset \mathbb R^{3\times 3} \times \mathbb R^3$ is the state vector consisting of a rotation matrix $R$ and a body angular velocity $\Omega$; $\tau \in \mathbb R^3$ is the control torque; and $\mathbb I$ is the moment of inertial matrix of the rigid body. From here on, we regard the system \eqref{rigid:original} as a system defined on  $\mathbb R^{3\times 3} \times \mathbb R^3$, treating $R$ as a $3\times 3$ matrix.  It is then easy to verify that $\SO \times \mathbb R^3$ is an invariant set of \eqref{rigid:original}, i.e. every flow starting in $M$ stays in $M$ for all $t\in \mathbb R$.
Assume that the full state of the system is available, which allows us to apply the following controller 
\[
\tau = \MI (u - \MI^{-1}(\MI \Omega \times \Omega))
\]
to transform the above system to
\begin{subequations}\label{rigid:eq}
\begin{align}
\dot R &= R\hat \Omega, \label{R:eq}\\
\dot \Omega &= u, \label{Omega:eq}
\end{align}
\end{subequations}
where $u$ is the new control vector. Note that $\SO \times \mathbb R^3$ is an invariant set of \eqref{rigid:eq}.  Let $W = \{ R \in \mathbb R^{3\times 3} \mid \det R >0\}$ and define a function $\tilde V$ on $W \times \mathbb R^3\subset \mathbb R^{3\times 3} \times\mathbb R^3$ by 
\[
\tilde V(R,\Omega) = \frac{\coV }{4}\|R^TR - I\|^2,
\]
where $\coV >0$. It is easy to verify that $\tilde V^{-1}(0) = \SO\times \mathbb R^3$ and 
\[
\nabla_R \tilde V = -\coV  R(R^TR - I), \quad \nabla_\Omega \tilde V = 0.
\]
With this function $\tilde V$,  the modified rigid body system $\tilde \Sigma$ corresponding to \eqref{Sigma:tilde} is computed as
\begin{subequations}\label{rigid:tilde:eq}
\begin{align}
\dot R &= R\hat \Omega - \coV R(R^TR - I), \label{R:s:eq}\\
\dot \Omega &= u, \label{Omega:s:eq}
\end{align}
\end{subequations}
where $(R,\Omega) \in \mathbb R^{3\times 3} \times \mathbb R^3$.

\subsection{Stabilization via Linearization in Ambient Euclidean Space}
Consider the system $\tilde \Sigma$ given in \eqref{Sigma:tilde} and its restriction $\Sigma | M$ to $M$ given in \eqref{Sigma:M}, where  $\Sigma | M =\tilde  \Sigma | M$ is understood. Let $(x_0,u_0) \in M \times \mathbb R^\dimu$ be an equilibrium point of $\Sigma |M$, i.e, $X(x_0,u_0) = 0$. 
Suppose that we want to approximate the dynamics of $\Sigma | M$ by (Jacobian) linearization at  $(x_0, u_0)$.  One would normally choose a local coordinate chart on $M$ containing the point $x_0$,  express the dynamics of $\Sigma |M$ on this chart and then linearize it on the chart, where rewriting the  equations of motion on the local chart could be regarded as an extra process. In addition to localness of linearization, the use of a local chart may cause  extra localness. To remedy it, we here propose to carry out Jacobian linearization of the ambient system $\tilde \Sigma $ at the equilibrium point $(x_0, u_0)$ in the ambient space $\mathbb R^n \times \mathbb R^\dimu$, instead.  The linearization $\tilde \Sigma^\ell_0$ of $\tilde \Sigma $ is given by
\[
\tilde \Sigma^\ell_0: \quad \dot x = \frac{\partial \tilde X}{\partial x}(x_0,u_0) (x-x_0) +  \frac{\partial \tilde X}{\partial u}(x_0,u_0) (u-u_0),
\]
where $(x,u) \in \mathbb R^n \times \mathbb R^\dimu$. Notice that the difference $x-x_0$ would not make  sense on $M$, but it  does make  perfect sense in the ambient Euclidean space $\mathbb R^n$.

The following lemma is trivial but useful:
\begin{lemma}\label{lemma:exp:stab}
If a feedback controller $u: \mathbb R^n \rightarrow \mathbb R^\dimu$ stabilizes, in any sense, the equilibrium point $x_0$ for the ambient system $\tilde \Sigma$, then its restriction $u |M$ to $M$ also stabilizes, in the same sense, the equilibrium point for the restricted system $\Sigma | M$. 
\end{lemma}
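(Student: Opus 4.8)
The plan is to reduce the statement to the observation that, once the feedback is substituted, the closed-loop trajectories of $\Sigma|M$ are literally a sub-collection of the closed-loop trajectories of $\tilde\Sigma$, namely those issuing from initial conditions on $M$. First I would form the two closed-loop vector fields: the ambient one $x \mapsto \tilde X(x,u(x))$ defined on all of $\mathbb R^n$, and the restricted one $x \mapsto X(x,u(x))$ defined on $M$. By \eqref{nabla:V:0} the correction term $-\nabla \tilde V$ vanishes on $M$, so $\tilde X(x,u(x)) = X(x,u(x))$ for every $x\in M$; that is, the two closed-loop vector fields agree along $M$.

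The key step is to verify that $M$ is invariant under the closed-loop ambient flow. Since $M$ is assumed invariant under $\Sigma$, the restricted closed-loop field $X(\cdot,u(\cdot))$ is tangent to $M$ at every point of $M$; combined with the equality above, the ambient closed-loop field $\tilde X(\cdot,u(\cdot))$ is therefore also tangent to $M$ along $M$. Hence any integral curve of the ambient closed-loop system that starts on $M$ remains on $M$. By uniqueness of solutions of ordinary differential equations, such a curve coincides with the integral curve of the restricted closed-loop system emanating from the same initial point. Thus, for initial conditions lying in $M$, the two families of trajectories match exactly.

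Finally I would transfer the stability property. Because $M$ is a regular (embedded) submanifold of $\mathbb R^n$, a neighborhood of $x_0$ in $M$ is precisely the intersection with $M$ of a neighborhood of $x_0$ in $\mathbb R^n$, and the same Euclidean/Frobenius norm is used on both $M$ and $\mathbb R^n$. Any stability estimate that $u$ guarantees for $x_0$ under $\tilde\Sigma$ is, by definition, a statement about all ambient trajectories issuing from points near $x_0$; restricting attention to those issuing from points of $M$---which, by the previous step, are exactly the trajectories of $\Sigma|M$ under $u|M$---the identical estimate holds verbatim, yielding the same notion of stability for $\Sigma|M$. The only point that requires any care, and hence the main obstacle, is the invariance/uniqueness step: one must confirm that the closed-loop ambient field is tangent to $M$ so that uniqueness permits identifying the two families of trajectories. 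Everything after that is a matter of reading off definitions in the common ambient norm, which is why the lemma is trivial yet worth recording.
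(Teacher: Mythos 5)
Your argument is correct, and it is exactly the intended one: the paper states this lemma without proof (calling it ``trivial''), and the content it implicitly relies on is precisely your chain of observations---tangency of the closed-loop field to $M$, uniqueness of ODE solutions identifying the restricted trajectories with the ambient ones starting on $M$, and the fact that neighborhoods of $x_0$ in the embedded submanifold $M$ are traces of ambient neighborhoods. Nothing is missing.
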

\begin{theorem}\label{theorem:linear:point:stab}
If a linear feedback controller $u : \mathbb R^n \rightarrow \mathbb R^\dimu$ exponentially stabilizes the equilibrium point $x_0$ for the linearization $\tilde \Sigma^\ell_0$ of the ambient system $\tilde \Sigma$, then it also exponentially stabilizes the equilibrium point $x_0$ for $\Sigma |M$. 

\begin{proof}{\rm
Apply the Lyapunov linearization method and Lemma \ref{lemma:exp:stab}. 
}\end{proof}
\end{theorem}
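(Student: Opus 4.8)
The plan is to run Lyapunov's indirect (first) method on the \emph{ambient} system and then invoke Lemma~\ref{lemma:exp:stab}, so that the whole argument takes place in $\mathbb R^n$ and the manifold $M$ enters only at the last step. First I would unpack the hypothesis. Writing the given linear (affine) feedback as $u(x) = u_0 + K(x-x_0)$ and setting $A = \frac{\partial \tilde X}{\partial x}(x_0,u_0)$, $B = \frac{\partial \tilde X}{\partial u}(x_0,u_0)$, the statement that $u$ exponentially stabilizes $x_0$ for the linear system $\tilde\Sigma^\ell_0$ is precisely the statement that the closed-loop matrix $A + BK$ is Hurwitz.

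Next I would apply this same controller to the full nonlinear ambient system $\tilde\Sigma$, forming the closed-loop vector field $F(x) = \tilde X(x,u(x))$. The point $x_0$ remains an equilibrium of $F$: since $u(x_0) = u_0$, $X(x_0,u_0)=0$, and $\nabla \tilde V(x_0) = 0$ by \eqref{nabla:V:0}, we get $F(x_0) = \tilde X(x_0,u_0) = 0$. The one computation I would actually carry out is the Jacobian of $F$ at $x_0$: by the chain rule and the affineness of the feedback, $DF(x_0) = A + BK$, so $DF(x_0)$ is Hurwitz. Lyapunov's indirect method then yields that $x_0$ is locally exponentially stable for the closed loop of $\tilde\Sigma$; that is, $u$ exponentially stabilizes $x_0$ for $\tilde\Sigma$.

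Finally, I would apply Lemma~\ref{lemma:exp:stab} with the relevant sense of stability being exponential stability: the restriction $u|M$ exponentially stabilizes $x_0$ for $\Sigma|M = \tilde\Sigma|M$, which is the claim.

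I do not expect a serious obstacle, as each step is standard, but two points deserve care. The delicate one is the commutation of \emph{linearize} and \emph{close the loop} that underlies $DF(x_0) = A+BK$; this relies on the feedback being affine and on linearizing at the equilibrium, for a genuinely nonlinear controller the Jacobian would involve $Du(x_0)$ rather than $K$ and the identification would have to be restated. The second is why exponential stability survives passage to $M$: a trajectory of the closed loop of $\Sigma|M$ starting on $M$ is, by invariance of $M$, also a trajectory of the closed loop of the ambient system, so the ambient exponential estimate applies verbatim on $M$ — this is exactly the content Lemma~\ref{lemma:exp:stab} packages.
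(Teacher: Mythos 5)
Your proposal is correct and follows exactly the paper's route: the paper's proof is the one-line instruction ``apply the Lyapunov linearization method and Lemma~\ref{lemma:exp:stab},'' and you have simply filled in the standard details (closed-loop Jacobian $A+BK$ Hurwitz, Lyapunov's indirect method on the ambient system, then restriction to $M$). No discrepancies to report.
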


Let us illustrate the above theorem with the free rigid body system given in \eqref{rigid:tilde:eq}. Choose any $R_0 \in \SO$. Then, $(R_0, 0) \in \SO \times \mathbb R^3$ is an equilibrium point of \eqref{rigid:tilde:eq} with $u = 0$. 
\begin{theorem}\label{theorem:rigid:linearization:point}
The linearization of  \eqref{rigid:tilde:eq} at $(R,\Omega) = (R_0, 0) \in \SO \times \mathbb R^3$ with $u_0 = 0$ is given by
\begin{subequations}\label{linear:rigid}
\begin{align}
\Delta \dot R &= R_0 \hat \Omega - 2\coV   R_0 \Sym (R_0^T\Delta R),\label{linear:rigid:a}\\
\dot \Omega &= u,\label{linear:rigid:b}
\end{align}
\end{subequations}
where 
\[
\Delta R = R -R_0.
\]
\end{theorem}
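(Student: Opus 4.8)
The plan is to compute the Jacobian of the right-hand side of \eqref{rigid:tilde:eq} directly in the ambient space $\mathbb R^{3\times 3}\times\mathbb R^3$, writing $R = R_0 + \Delta R$, $\Omega = 0 + \Omega$, and $u = 0 + u$, and retaining only the terms linear in the increments $\Delta R$, $\Omega$, $u$. Because \eqref{Omega:s:eq} reads $\dot\Omega = u$ and is already linear with $u_0 = 0$, its linearization is literally itself, which yields \eqref{linear:rigid:b} with no work. All the content therefore lies in linearizing the right-hand side $X_R(R,\Omega) := R\hat\Omega - \coV R(R^TR - I)$ of \eqref{R:s:eq} at $(R_0, 0)$.

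First I would split $X_R$ into the kinematic term $R\hat\Omega$ and the penalty term $-\coV R(R^TR-I)$ and differentiate each separately. For the kinematic term, the product rule gives the increment $(\Delta R)\hat\Omega_0 + R_0\hat\Omega$; since $\Omega_0 = 0$ and hence $\hat\Omega_0 = 0$, the first summand drops out and only $R_0\hat\Omega$ survives, matching the first term of \eqref{linear:rigid:a}. For the penalty term I would expand $R(R^TR-I) = RR^TR - R$ and apply the product rule at $R_0$, obtaining the increment $\Delta R\,R_0^TR_0 + R_0\,\Delta R^T R_0 + R_0 R_0^T\,\Delta R - \Delta R$. Using $R_0\in\SO$, so that $R_0^TR_0 = R_0R_0^T = I$, this collapses to $\Delta R + R_0\Delta R^T R_0$, and the penalty term contributes $-\coV(\Delta R + R_0\Delta R^T R_0)$ to the linearization.

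The final step is a short algebraic identity that recasts this into the symmetrized form stated: by the definition of $\Sym$ and $R_0R_0^T = I$, one has $2R_0\Sym(R_0^T\Delta R) = R_0(R_0^T\Delta R + \Delta R^T R_0) = \Delta R + R_0\Delta R^T R_0$, so the penalty contribution is exactly $-2\coV R_0\Sym(R_0^T\Delta R)$, giving \eqref{linear:rigid:a}. I expect no genuine obstacle here; the only places demanding care are the noncommutative product rule for $RR^TR$ — keeping the three increment terms in the correct order and correctly invoking $R_0^TR_0 = R_0R_0^T = I$ — and recognizing that the resulting expression $\Delta R + R_0\Delta R^T R_0$ is precisely $2R_0\Sym(R_0^T\Delta R)$ rather than leaving it unsymmetrized.
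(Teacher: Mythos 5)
Your proposal is correct and is essentially the same computation as the paper's proof: the paper evaluates the derivative $\frac{d}{ds}\big|_{s=0}$ of the right-hand side along $c(s)=R_0+s\Delta R$, $d(s)=s\Omega$, which is exactly your first-order product-rule expansion, and arrives at the same intermediate expression $-\coV R_0(\Delta R^TR_0+R_0^T\Delta R)=-2\coV R_0\Sym(R_0^T\Delta R)$. The only cosmetic difference is that the paper differentiates $c(c^Tc-I)$ directly rather than expanding $RR^TR-R$ first.
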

\begin{proof}
Equation \eqref{linear:rigid:a} can be easily derived by using the definition of the derivative as follows. Let $c(s) = R_0 + s (R-R_0) = R_0 + s\Delta R$ and $d(s) = s \Omega$, where $s\in \mathbb R$. Then
\begin{align*}
&\left . \frac{d}{ds} \right |_{s=0}  (c(s)\widehat {d(s)} - \coV c(s)(c(s)^Tc(s) - I))\\
&\qquad = R_0\hat\Omega - \coV  R_0 (\Delta R^T R_0 + R_0^T\Delta R)\\
&\qquad = R_0\hat\Omega - 2\coV R_0\Sym(R_0^T\Delta R),
\end{align*}
which is equal to the expression on the right side of \eqref{linear:rigid:a}.
\end{proof}

Let us change coordinates from $\Delta R$ to a new matrix variable $Z$ as follows:
\begin{equation}\label{def:ZR}
Z = R_0^T\Delta R.
\end{equation}
Let 
\begin{equation}\label{def:ZsZk}
Z_s = \Sym(Z), \quad Z_k = \Skew (Z)
\end{equation}
such that 
\begin{equation}\label{Z:Zs:Zk}
Z = Z_s + Z_k.
\end{equation}
  Multiplying \eqref{linear:rigid:a} by $R_0^T$ and taking the symmetric and skew symmetric parts, respectively,   transforms   \eqref{linear:rigid}  to 
\[
\dot Z_s = -2\coV Z_s,\quad
\dot Z_k = \hat \Omega, \quad \dot \Omega  = u,
\]
which can be also written as
\begin{equation}\label{rigid:Z:sys}
\dot Z_s = -2\coV Z_s,\quad
\dot Z_k^\vee =  \Omega, \quad \dot \Omega  = u.
\end{equation}

\begin{theorem}\label{theorem:linear:rigid:stab}
Take any two $3\times 3$ matrices $K_P$ and $K_D$ such that the following $6\times 6$ matrix 
\begin{equation}\label{point:stab:Hurwitz}
\begin{bmatrix}
0 & I \\
-K_P & -K_D
\end{bmatrix}
\end{equation}
is Hurwitz. Then, the linear PD controller
\begin{equation}\label{linear:control:rigid}
u(\Delta R,\Omega) = -K_P  \cdot \Skew (R_0^T\Delta R)^\vee - K_D\Omega
\end{equation}
 exponentially stabilizes the equilibrium point $(R_0,0)$ for the linearized system  \eqref{linear:rigid}. Moreover, it exponentially stabilizes the equilibrium point $(R_0,0)$ for the rigid body system \eqref{rigid:eq} on $\SO \times \mathbb R^3$.
\end{theorem}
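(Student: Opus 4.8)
The plan is to reuse the coordinate change \eqref{def:ZR}--\eqref{def:ZsZk}, under which the linearized dynamics \eqref{linear:rigid} already take the decoupled form \eqref{rigid:Z:sys}, and then to certify exponential stability block by block. First I would rewrite the controller \eqref{linear:control:rigid} in these coordinates: since $\Skew(R_0^T\Delta R) = \Skew(Z) = Z_k$, the feedback becomes $u = -K_P Z_k^\vee - K_D\Omega$, which depends on $(Z_k^\vee,\Omega)$ alone and not on $Z_s$.

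Substituting this $u$ into \eqref{rigid:Z:sys} produces a closed loop that is block diagonal in $Z_s$ and $(Z_k^\vee,\Omega)$. The first block is the autonomous equation $\dot Z_s = -2\coV Z_s$, which is exponentially stable precisely because $\coV>0$; the second block is
\[
\begin{bmatrix}\dot Z_k^\vee \\ \dot\Omega\end{bmatrix}
=
\begin{bmatrix}0 & I \\ -K_P & -K_D\end{bmatrix}
\begin{bmatrix}Z_k^\vee \\ \Omega\end{bmatrix},
\]
whose coefficient matrix is exactly \eqref{point:stab:Hurwitz} and hence Hurwitz by hypothesis. A block-diagonal linear system is exponentially stable if and only if each diagonal block is, so the origin is exponentially stable in the $(Z_s,Z_k^\vee,\Omega)$ coordinates. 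Since the composite map $\Delta R\mapsto Z = R_0^T\Delta R\mapsto(\Sym Z,(\Skew Z)^\vee)=(Z_s,Z_k^\vee)$ is a linear isomorphism of $\mathbb R^{3\times 3}$ onto $\Sym(\mathbb R^{3\times 3})\times\mathbb R^3$ (the vee map rescaling norms by the constant factor recorded in $\|Z_k\|^2 = 2\|Z_k^\vee\|^2$, part~3 of Lemma~\ref{lemma:prelim}), and exponential stability is invariant under linear isomorphisms, the equilibrium $(R_0,0)$ is exponentially stable for \eqref{linear:rigid}. This settles the first assertion.

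For the second assertion I would invoke the transfer principle already established. By Theorem~\ref{theorem:rigid:linearization:point} the system \eqref{linear:rigid} is the Jacobian linearization $\tilde\Sigma^\ell_0$ of the modified ambient system \eqref{rigid:tilde:eq} at $(R_0,0)$ with $u_0=0$, and \eqref{linear:control:rigid} is a linear feedback on the ambient space $\mathbb R^{3\times 3}\times\mathbb R^3$. Having shown it exponentially stabilizes $(R_0,0)$ for $\tilde\Sigma^\ell_0$, Theorem~\ref{theorem:linear:point:stab} --- whose proof marries Lyapunov's indirect method with the restriction Lemma~\ref{lemma:exp:stab} --- immediately delivers exponential stabilization of $(R_0,0)$ for $\Sigma|M=\tilde\Sigma|M$, i.e.\ for the rigid body system \eqref{rigid:eq} on $\SO\times\mathbb R^3$.

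The arithmetic above is short, so the conceptually decisive step --- and the spot where the whole construction could silently fail --- is the contraction of the $Z_s$-block. The symmetric directions $Z_s$ are transverse to $\SO\times\mathbb R^3$ at $(R_0,0)$ (the tangent directions correspond to $Z$ skew). Without the modification term $-\nabla\tilde V$, i.e.\ the summand $-2\coV R_0\Sym(R_0^T\Delta R)$ in \eqref{linear:rigid:a}, the transverse dynamics would read $\dot Z_s = 0$, leaving the ambient linearization with a center subspace and rendering Lyapunov's indirect method inconclusive. It is exactly this added term, with $\coV>0$, that makes the \emph{full} ambient linearization Hurwitz rather than merely its part tangent to $M$, and that thereby licenses the passage from $\tilde\Sigma^\ell_0$ to $\Sigma|M$ in Theorem~\ref{theorem:linear:point:stab}.
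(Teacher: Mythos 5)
Your proposal is correct and follows essentially the same route as the paper's own proof: pass to the $(Z_s,Z_k^\vee,\Omega)$ coordinates, observe that the closed loop decouples into the contracting $Z_s$ block and the $(Z_k^\vee,\Omega)$ block governed exactly by the Hurwitz matrix \eqref{point:stab:Hurwitz}, and then invoke Theorem~\ref{theorem:linear:point:stab} for the nonlinear claim. You merely spell out the details the paper leaves as ``easy to verify,'' plus a worthwhile remark on why the $-2\coV Z_s$ term is what removes the transverse center directions.
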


\begin{proof}
Recall that the system in \eqref{linear:rigid} has been transformed to \eqref{rigid:Z:sys} by the state transformation \eqref{def:ZR} -- \eqref{Z:Zs:Zk}.
Take any $3\times 3$ matrices $K_P$ and $K_D$ such that the matrix in \eqref{point:stab:Hurwitz} is Hurwitz, and then apply  the following controller
\[
u = -K_P Z_k^\vee - K_D \Omega
\]
to the system \eqref{rigid:Z:sys}.
It is then easy to verify that the resultant closed-loop system is exponentially stable, which proves the first statement of the theorem. The second statement follows from the first statement and Theorem \ref{theorem:linear:point:stab}.  
\end{proof}

\begin{theorem}\label{theorem:PID:point:stab}
Take any three $3\times 3$ matrices $K_P$, $K_I$ and $K_D$ such that the following  polynomial in $\lambda$
\begin{equation}\label{PID:poly}
\det (\lambda^3 I + \lambda^2K_D + \lambda K_P + K_I) = 0
\end{equation}
is Hurwitz. Then, the linear PID  controller
\begin{align}
u(\Delta R,\Omega) &= -K_P  \cdot \Skew (R_0^T\Delta R)^\vee   - K_D\Omega \nonumber \\
&\quad\quad - K_I \int_0^t  \Skew (R_0^T\Delta R(\tau))^\vee  d\tau \label{PID:rigid}
\end{align}
 exponentially stabilizes the equilibrium point $(R_0,0)$ for the linearized system  \eqref{linear:rigid}. Moreover, it exponentially stabilizes the equilibrium point $(R_0,0)$ for the rigid body system \eqref{rigid:eq} on $\SO \times \mathbb R^3$.
\end{theorem}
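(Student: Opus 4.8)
The plan is to reduce the PID problem to the same block companion-matrix analysis that settled the PD case in Theorem~\ref{theorem:linear:rigid:stab}, after first promoting the integral term to a genuine state variable so that the dynamic PID law becomes a \emph{static} feedback on a suitably augmented system. This is necessary because Lemma~\ref{lemma:exp:stab} and Theorem~\ref{theorem:linear:point:stab} are stated for static state feedback, whereas \eqref{PID:rigid} is a dynamic controller; accordingly, exponential stabilization of the linearization \eqref{linear:rigid} by \eqref{PID:rigid} is to be understood for the closed loop that includes the integrator state.

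First I would introduce the integrator variable $z = \int_0^t \Skew(R_0^T\Delta R(\tau))^\vee\, d\tau \in \mathbb R^3$, governed by $\dot z = \Skew(R_0^T(R-R_0))^\vee$, and augment the modified ambient system \eqref{rigid:tilde:eq} by this equation to obtain an ambient system on $\mathbb R^{3\times 3}\times \mathbb R^3 \times \mathbb R^3$. The controller \eqref{PID:rigid} then reads as the static feedback $u = -K_P\Skew(R_0^T\Delta R)^\vee - K_D\Omega - K_I z$, which is linear in the shifted state $(\Delta R,\Omega,z)$. Since the integrator equation feeds only into $u$ and not into the $R$-dynamics, and since $\dot R = R\hat\Omega - \coV R(R^TR-I)$ preserves $\SO$ exactly as before, the manifold $\SO \times \mathbb R^3 \times \mathbb R^3$ is invariant under the augmented system. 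Thus the entire setup applies verbatim with $M$ replaced by $\SO \times \mathbb R^3 \times \mathbb R^3$.

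Next I would linearize the augmented system at $(R_0,0,0)$ and pass to the coordinates $(Z_s, Z_k^\vee, \Omega, z)$ of \eqref{def:ZR}--\eqref{Z:Zs:Zk}, reusing the computation that produced \eqref{rigid:Z:sys}. Because $\Skew(R_0^T\Delta R)^\vee = Z_k^\vee$ is already linear in $R$, the integrator equation linearizes to $\dot z = Z_k^\vee$. The $Z_s$-block decouples as $\dot Z_s = -2\coV Z_s$ and is exponentially stable on its own; writing $\xi = Z_k^\vee$, the remaining block is the third-order chain $\dot z = \xi$, $\dot \xi = \Omega$, $\dot \Omega = -K_I z - K_P \xi - K_D \Omega$, whose system matrix is the block companion matrix with characteristic polynomial $\det(\lambda^3 I + \lambda^2 K_D + \lambda K_P + K_I)$. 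The Hurwitz hypothesis \eqref{PID:poly} is therefore exactly the statement that this block is Hurwitz, so the full linearized augmented system is exponentially stable at the origin. In particular $\det K_I \neq 0$, which forces the augmented equilibrium to be precisely $(R_0,0,0)$.

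Finally I would apply Theorem~\ref{theorem:linear:point:stab} to the augmented ambient system to obtain exponential stability of the augmented nonlinear closed loop at $(R_0,0,0)$, then restrict to the invariant manifold $\SO \times \mathbb R^3 \times \mathbb R^3$ via Lemma~\ref{lemma:exp:stab}; projecting out the $z$-coordinate yields exponential convergence of $(R,\Omega)$ to $(R_0,0)$ for \eqref{rigid:eq}. The main conceptual obstacle is entirely in the first step: one must justify that augmenting by the integrator legitimately places the dynamic PID law within the static-feedback framework of Section~II, verifying both the invariance of the enlarged state-space manifold and the correct equilibrium structure. Once that augmentation is in place, the remainder is the companion-matrix computation already rehearsed for the PD controller.
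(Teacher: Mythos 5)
Your proposal is correct and follows essentially the same route as the paper: both reduce the closed loop in the $(Z_s,Z_k^\vee,\Omega)$ coordinates of \eqref{rigid:Z:sys} to a third-order linear block whose characteristic polynomial is exactly \eqref{PID:poly}, with the $Z_s$-block decoupled and exponentially stable, and then invoke Theorem \ref{theorem:linear:point:stab}. The only difference is cosmetic: the paper eliminates the integral by differentiating the $\Omega$-equation and disposes of the nonlinear case with the terse remark that the PID law should be ``treated as dynamic feedback,'' whereas you make that remark rigorous by explicitly augmenting the state with the integrator $z$ and checking invariance of $\SO\times\mathbb R^3\times\mathbb R^3$ --- a worthwhile clarification, but not a different proof.
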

\begin{proof}
Apply the controller  \eqref{PID:rigid} to  \eqref{rigid:Z:sys}  to get
\begin{align*}
\dot Z_s &= -2\coV Z_s,\\
\dot Z_k^\vee &=  \Omega, \\
\dot \Omega  &=  - K_PZ_k^\vee - K_D\Omega - K_I\int_0^tZ_k^\vee(\tau) d\tau,
\end{align*}
which is transformed, by differentiation, to
\begin{align*}
\dot Z_s &= -2\coV Z_s,\\
\dddot Z_k^\vee &+ K_D \ddot Z_k^\vee + K_P\dot Z_k^\vee + K_I  Z_k^\vee = 0.
\end{align*}
It is easy to prove that this linear system is exponentially stable by the Hurwitz condition on the polynomial in \eqref{PID:poly}.
This proves the first statement of the theorem. 

The second statement of the theorem can be proven by treating the PID controller \eqref{PID:rigid} as dynamic feedback to the nonlinear system \eqref{rigid:eq}  and then applying  the first statement of this theorem and Theorem \ref{theorem:linear:point:stab}. In this way, the Lyapunov linearization method is rigorously applied.  
\end{proof}

\begin{remark}{\rm
1. Since $R_0 \in \SO$, we have $\Skew (R_0^T\Delta R) = \Skew (R_0^T R - I) = \Skew (R_0^TR)$. Hence the controllers given in \eqref{linear:control:rigid} and \eqref{PID:rigid} can be written respectively as
\[
u = -K_P Z_k^\vee - K_D\Omega,
\]
and 
\begin{align*}
u &= -K_P  Z_k^\vee   - K_D\Omega - K_I \int_0^t Z_k^\vee(\tau)  d\tau
\end{align*}
with
\[
Z_k = \Skew(R_0^T R).
\]
These expressions avoid the computation $\Delta R = R-R_0$ that would be  an invalid operation on $\SO$.
It is interesting that this controller, though designed in Euclidean space, can be computed on $\SO \times \mathbb R^3$, which was not intended at the beginning of design.

2. Suppose that one chooses a finite number of points $R_0, \ldots, R_p$   from $\SO$ and plans a gain scheduling with the linearized systems at these points with $\Omega_0 = 0$.  If these points are not covered by one local chart on $\SO$, then one would need to change coordinates and re-do linearization on each change of coordinates. However,  in our scheme, only one form of  linearization in one single global Cartesian coordinate system, which is \eqref{linear:rigid}, is needed for all these points.
}\end{remark}

We now carry out a simulation to demonstrate a good performance of the stabilizing controller \eqref{linear:control:rigid} for the rigid body system \eqref{rigid:eq}, or equivalently \eqref{rigid:tilde:eq} with $\coV=1$, the latter of which is known to be better for numerical integration than the former; refer to  \cite{ChJiPe16} more about this.  Hence, we will use \eqref{rigid:tilde:eq} with $\coV=1$ for numerical integration. However, one can freely use \eqref{rigid:eq} instead for numerical integration.
The following control parameter values are chosen: 
\[
K_P = 4I, \quad K_D = 2I,
\]
 so that  the eigenvalues of \eqref{point:stab:Hurwitz} are placed at $-1\pm i \sqrt 3$.
The target equilibrium point $(R_0,\Omega_0) \in \SO \times \mathbb R^3$ is given by
\[
R_0 = \textrm{diag} \{-1,-1,1\}, \quad \Omega_0 = (0,0,0).
\]
The initial condition is chosen as 
\[
R(0) = \exp \left (\frac{2\pi}{3}\hat e_2 \right ), \quad \Omega(0) = (0,1,1),
\]
where $R(0)$ is a rotation about  $e_2=(0,1,0)$ through $2\pi/3$ radians.  The initial orientation error is computed as $\|R(0) - R_0\| = 2\sqrt 2$, which is the maximum possible error on $\SO$.  The magnitude errors of orientation and angular velocity  are plotted in Fig. \ref{figure.point.stab}, showing an excellent stabilizing performance of  the linear controller \eqref{linear:control:rigid} for the nonlinear system \eqref{rigid:eq}. 
\begin{figure}[bt]
\begin{center}
\includegraphics[scale = 0.37]{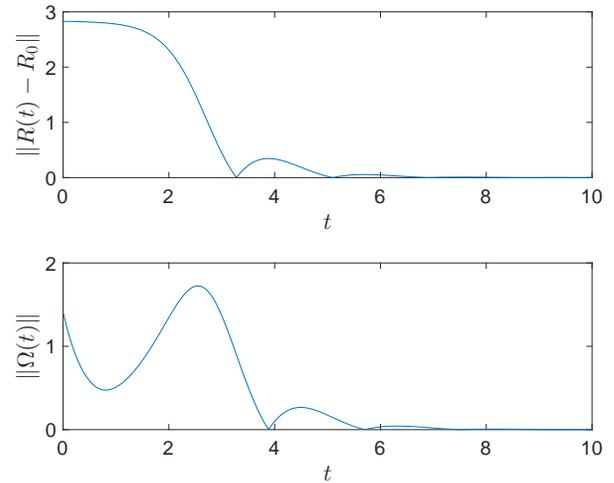}
\end{center}
\caption{\label{figure.point.stab}  The simulation result for  the stabilization of the equilibrium point $(R, \Omega ) = (R_0, 0)$ by the linear controller \eqref{linear:control:rigid} for the nonlinear rigid body system  \eqref{rigid:eq}.}
\end{figure}

\subsection{Tracking via Linearization in Ambient Euclidean Space}
Consider again the system $\tilde \Sigma$ given in \eqref{Sigma:tilde} and its restriction $\Sigma | M$ to $M$ given in \eqref{Sigma:M}, where  $\Sigma | M =\tilde  \Sigma | M$ is understood. Choose a reference trajectory $x_0: [0,\infty) \rightarrow M$ for $\Sigma |M$ on $M$ driven by a control signal $u_0: [0,\infty) \rightarrow \mathbb R^\dimu$, so that 
\[
\dot x_0(t) = \tilde X(x_0(t), u_0(t)) \quad \forall t\geq 0.
\]
We can then linearize the ambient system $\tilde \Sigma$ along the trajectory $(x_0(t), u_0(t))$  in $\mathbb R^n$ as follows:
\begin{equation}\label{Sigma:tilde:ell:t}
\tilde \Sigma^\ell_{\rm t}: \quad  \Delta \dot x = A(t) \Delta x+  B(t)\Delta u,
\end{equation}
where 
\[
A(t) = \frac{\partial \tilde X}{\partial x}(x_0(t),u_0(t)), \quad B(t) = \frac{\partial \tilde X}{\partial u}(x_0(t),u_0(t)) 
\]
and
\[
\Delta x = x - x_0 \in \mathbb R^n, \quad \Delta u = u - u_0 \in \mathbb R^\dimu.
\]
The following lemma is trivial but useful:
\begin{lemma}\label{lemma:exp:tracking}
If $u = u(x,t)$ is an exponentially tracking controller for the ambient system $\tilde \Sigma$ for the reference trajectory $x_0(t)$, then it is also an exponentially tracking controller for the system $ \Sigma | M$ on $M$ for the same reference trajectory.
\end{lemma}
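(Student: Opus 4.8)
The plan is to mirror the argument behind Lemma~\ref{lemma:exp:stab}, the key observation being that any closed-loop trajectory of the ambient system $\tilde\Sigma$ that starts on $M$ is literally a trajectory of $\Sigma|M$. First I would establish the underlying invariance fact for the \emph{feedback} flow: the closed-loop ambient vector field $x \mapsto \tilde X(x, u(x,t))$ is tangent to $M$ at every point of $M$. This follows because $M$ is invariant under the flow of $\Sigma$ for every admissible control, so in particular $X(\cdot,u)$ is tangent to $M$ for each fixed $u$; combined with $\tilde X(x,u)=X(x,u)$ on $M$ (a consequence of \eqref{nabla:V:0}), the closed-loop field coincides with $X(x,u(x,t))$ on $M$ and is therefore tangent to $M$.

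Second, I would fix an initial state $x(0)\in M$ in the tracking neighborhood about $x_0(0)$ and let $x(t)$ be the solution of the closed-loop ambient system $\dot x = \tilde X(x,u(x,t))$. By the tangency just established and uniqueness of ODE solutions, $x(t)$ remains in $M$ for all $t$, where $\tilde X = X$; hence $x(t)$ is exactly the closed-loop trajectory of $\Sigma|M$ generated by the restricted controller $u|M$ from the same initial condition.

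Third, since the initial conditions available to $\Sigma|M$ (those lying in $M$) form a subset of those available to $\tilde\Sigma$, and the corresponding trajectories coincide by the preceding step, the exponential tracking estimate $\|x(t)-x_0(t)\| \le c\,e^{-\lambda t}\|x(0)-x_0(0)\|$ guaranteed for $\tilde\Sigma$ transfers directly to $\Sigma|M$. Because the ambient Euclidean norm restricts to the embedded submanifold $M$, this is precisely exponential tracking of $x_0(t)$ for $\Sigma|M$, which completes the argument.

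The only point demanding care -- the would-be main obstacle -- is invariance of $M$ under the closed-loop flow rather than merely the open-loop flow; once one invokes tangency of $X(\cdot,u)$ for every fixed $u$, this reduces to the open-loop invariance hypothesis, which is why the lemma is ultimately ``trivial.'' I would also note that measuring the tracking error in the ambient Euclidean norm is, on the regular submanifold $M$, locally equivalent to any intrinsic notion, so no generality is lost by working in $\mathbb R^n$.
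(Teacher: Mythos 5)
Your argument is correct: the paper offers no proof at all (it declares the lemma ``trivial''), and your three steps---tangency of the closed-loop field to $M$ via \eqref{nabla:V:0} and open-loop invariance, invariance of $M$ under the closed-loop flow by uniqueness of solutions, and restriction of the ambient exponential estimate to initial conditions in $M$---supply exactly the reasoning the author intends. Nothing is missing and nothing differs in substance from the paper's (implicit) approach.
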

\begin{theorem}\label{theorem:Khalil}
Suppose that a linear feedback controller   $\Delta u = - K(t)\Delta x$ exponentially stabilizes the origin for the linearized system $\tilde \Sigma_{\rm t}^\ell$.  Let $B_r = \{ z \in \mathbb R^n \mid \| z\| <r\}$ for some $r>0$ and $f:   B_r \times [0,\infty)  \rightarrow \mathbb R$ be a function defined by
\[
f(z,t) = \tilde X( x_0(t) + z, u_0(t) - K(t) z) - \tilde X(x_0(t),u_0(t)).
\]
If the derivative $\frac{\partial f}{\partial z}(z,t)$ is bounded and Lipschitz on $B_r$ uniformly in $t$, then the controller
\[
u(x,t) = u_0(t)  - K(t) (x - x_0(t))
\]
enables the system $\Sigma | M$ on $M$  to track the reference trajectory $x_0(t)$ exponentially. 

\end{theorem}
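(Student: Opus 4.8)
The plan is to first establish exponential tracking for the \emph{ambient} system $\tilde\Sigma$ and then transfer this property to $\Sigma|M$ via Lemma~\ref{lemma:exp:tracking}. Accordingly, I would begin by writing the error dynamics in the ambient space. Setting $\Delta x = x - x_0(t)$ and substituting the proposed feedback $u(x,t) = u_0(t) - K(t)\,\Delta x$ into $\dot x = \tilde X(x,u)$, one obtains
\begin{align*}
\Delta\dot x &= \tilde X\bigl(x_0(t)+\Delta x,\, u_0(t)-K(t)\Delta x\bigr) - \tilde X\bigl(x_0(t),u_0(t)\bigr) \\
&= f(\Delta x, t),
\end{align*}
where $f$ is exactly the function defined in the statement. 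Since $f(0,t)=0$ for all $t\ge 0$, the point $\Delta x = 0$ is an equilibrium of this nonautonomous error system, and exponential tracking of $x_0(t)$ by $\tilde\Sigma$ is equivalent to local exponential stability of the origin for $\Delta\dot x = f(\Delta x,t)$.

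Next I would compute the Jacobian of $f$ at the origin. By the chain rule,
\[
\frac{\partial f}{\partial z}(0,t) = \frac{\partial \tilde X}{\partial x}\bigl(x_0(t),u_0(t)\bigr) - \frac{\partial \tilde X}{\partial u}\bigl(x_0(t),u_0(t)\bigr)K(t) = A(t) - B(t)K(t),
\]
which is precisely the closed-loop system matrix of the linearization $\tilde\Sigma_{\rm t}^\ell$ under the feedback $\Delta u = -K(t)\Delta x$. By hypothesis this closed-loop linear system is exponentially stable, i.e. the origin is (uniformly) exponentially stable for $\dot z = (A(t)-B(t)K(t))z$.

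The core of the argument is then the indirect Lyapunov theorem for \emph{time-varying} systems (e.g.\ Khalil, \emph{Nonlinear Systems}, Thm.~4.13): if $f(0,t)=0$, if $\partial f/\partial z$ is bounded and Lipschitz on $B_r$ uniformly in $t$, and if the linear system $\dot z = \frac{\partial f}{\partial z}(0,t)\,z$ is exponentially stable, then the origin of $\Delta\dot x = f(\Delta x,t)$ is locally exponentially stable. The boundedness and Lipschitz hypotheses imposed on $\partial f/\partial z$ are exactly the regularity conditions needed to invoke this theorem: they ensure that the remainder $f(z,t) - \frac{\partial f}{\partial z}(0,t)z$ is dominated quadratically near $z=0$ uniformly in $t$, so that a converse-Lyapunov quadratic form $V(z,t)=z^T P(t) z$ for the linear part (whose existence follows from the uniform exponential stability of the linear error dynamics) serves as a strict Lyapunov function for the full nonlinear error system in a neighborhood of the origin. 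This yields exponential tracking of $x_0(t)$ by $\tilde\Sigma$ under $u(x,t)$, and Lemma~\ref{lemma:exp:tracking} then transfers the property to $\Sigma|M$, giving the claim.

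The one genuinely nontrivial step is the time-varying indirect Lyapunov theorem itself; the rest is bookkeeping. It is worth emphasizing why the autonomous Lyapunov-linearization method used in Theorems~\ref{theorem:linear:point:stab} and~\ref{theorem:linear:rigid:stab} does not suffice here: the error system is nonautonomous, and for nonautonomous linear systems pointwise Hurwitzness of $A(t)-B(t)K(t)$ is \emph{not} enough to conclude stability. This is precisely why the hypothesis is phrased as uniform exponential stability of the linear error dynamics together with the uniform regularity of $f$, rather than as a mere eigenvalue condition, and I expect that correctly packaging these hypotheses to match the cited theorem — rather than any computation — is the main point of the proof.
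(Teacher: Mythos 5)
Your proposal is correct and follows exactly the route of the paper's own (one-line) proof: write the ambient error dynamics, invoke Theorem 4.13 of Khalil on the nonautonomous system $\Delta\dot x = f(\Delta x,t)$ whose linearization at the origin is the exponentially stable closed loop $A(t)-B(t)K(t)$, and then transfer the conclusion to $\Sigma|M$ via Lemma~\ref{lemma:exp:tracking}. You have merely made explicit the bookkeeping that the paper leaves to the reader.
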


\begin{proof} 
Apply Theorem 4.13 in \cite{Kh02} and Lemma \ref{lemma:exp:tracking} above.  
\end{proof}

We illustrate the above theorem with the free rigid body system \eqref{rigid:tilde:eq}. Take a reference trajectory $(R_0(t), \Omega_0 (t)) \in \SO \times \mathbb R^3$ and the corresponding control signal $u_0(t)$ such that 
\begin{equation}\label{ref:traj:rigid}
\dot R_0(t) = R_0(t)\hat \Omega_0 (t), \quad \dot \Omega_0 (t) = u_0(t), \,\, \forall t \geq 0,
\end{equation}
which can be also understood as equations that define $\Omega_0(t)$ and $u_0(t)$ in terms of $R_0(t)$ and its time derivatives. Assume that  $(R_0(t), \Omega_0(t))$ and $u_0(t)$ are bounded  over the time interval $[0,\infty )$.
\begin{theorem}
The linearization of \eqref{rigid:tilde:eq} along the reference trajectory $(R_0(t),\Omega_0(t)) \in \SO \times \mathbb R^3$ and the reference control signal $u_0(t)$ is given by
\begin{subequations}\label{rigid:tilde:ell}
\begin{align}
\Delta \dot R &= \Delta R \hat\Omega_0+ R_0 \widehat{\Delta \Omega} - 2\coV  R_0 \Sym (R_0^T\Delta R),\\
\Delta\dot \Omega &= \Delta u,
\end{align}
\end{subequations}
where
\[
\Delta R = R - R_0(t) \in \mathbb R^{3\times 3}, \quad \Delta \Omega = \Omega - \Omega_0(t) \in \mathbb R^3,
\]
and
\[
 \Delta u = u - u_0(t) \in \mathbb R^3.
 \]

\end{theorem}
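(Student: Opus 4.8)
The plan is to reuse, pointwise in time, the one-parameter variation argument from the proof of Theorem~\ref{theorem:rigid:linearization:point}, now taken about the reference trajectory rather than about a fixed equilibrium. Fix $t\geq 0$. The linearization \eqref{Sigma:tilde:ell:t} requires the action of the Jacobian $A(t)=\partial\tilde X/\partial x$ and the input matrix $B(t)=\partial\tilde X/\partial u$, evaluated at $(R_0(t),\Omega_0(t),u_0(t))$, on the increments $\Delta R$, $\Delta\Omega$, $\Delta u$. I would compute the action of $A(t)$ on $(\Delta R,\Delta\Omega)$ directly as a directional derivative: introduce the straight-line variations $c(s)=R_0(t)+s\,\Delta R$ and $e(s)=\Omega_0(t)+s\,\Delta\Omega$ in $\mathbb R^{3\times 3}\times\mathbb R^3$, substitute them into the right-hand side of \eqref{R:s:eq}, and differentiate at $s=0$.

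The differentiation splits into the two terms of \eqref{R:s:eq}. For the transport term $c(s)\widehat{e(s)}$, the product rule together with the linearity of the hat map gives $\Delta R\,\hat\Omega_0+R_0\,\widehat{\Delta\Omega}$. For the correction term $-\coV\,c(s)(c(s)^Tc(s)-I)$, the product rule produces $-\coV\bigl[\Delta R(R_0^TR_0-I)+R_0(\Delta R^TR_0+R_0^T\Delta R)\bigr]$. Here is the one step that matters: because $R_0(t)\in\SO$ we have $R_0^TR_0-I=0$, so the first bracketed summand vanishes and only $-\coV R_0(\Delta R^TR_0+R_0^T\Delta R)$ survives. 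Recognizing $\Delta R^TR_0+R_0^T\Delta R=(R_0^T\Delta R)^T+R_0^T\Delta R=2\Sym(R_0^T\Delta R)$ collapses this to $-2\coV R_0\Sym(R_0^T\Delta R)$, which is exactly the correction term appearing in \eqref{rigid:tilde:ell}.

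The $\Omega$-dynamics \eqref{Omega:s:eq} is already linear, $\dot\Omega=u$, so its contribution is simply $B(t)\,\Delta u=\Delta u$ with no dependence on $\Delta R$ or $\Omega$, giving $\Delta\dot\Omega=\Delta u$ at once. Assembling the two pieces yields \eqref{rigid:tilde:ell}. The argument carries no genuine obstacle; the only point requiring care is the vanishing of $R_0^TR_0-I$ in the correction term, and this is precisely the feature that makes linearizing the modified ambient system $\tilde\Sigma$ in Cartesian coordinates clean: on the reference, which lies in $M=\SO\times\mathbb R^3$, the added term $-\coV R(R^TR-I)$ vanishes identically, while its derivative survives and supplies the transverse feedback $-2\coV R_0\Sym(R_0^T\Delta R)$. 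One should also record that $\Delta\dot R=\dot R-\dot R_0(t)$ with $\dot R_0=R_0\hat\Omega_0$ from \eqref{ref:traj:rigid}, which agrees with evaluating $\tilde X$ on the reference, so the directional derivative indeed computes $A(t)$ acting on $\Delta x$ as demanded by \eqref{Sigma:tilde:ell:t}.
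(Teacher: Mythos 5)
Your proposal is correct and follows exactly the route the paper intends: the paper's own proof simply states that the theorem follows by the same one-parameter variation technique used for Theorem~\ref{theorem:rigid:linearization:point}, which is precisely the directional-derivative computation you carry out, including the key cancellation $R_0^TR_0-I=0$ on the reference and the identification $\Delta R^TR_0+R_0^T\Delta R=2\Sym(R_0^T\Delta R)$. No discrepancies.
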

 \begin{proof}
This theorem can be proven with the same technique as that used for the proof of Theorem \ref{theorem:rigid:linearization:point}.
 \end{proof}

We now introduce a new matrix variable $Z$ replacing $\Delta R$ as follows:
\begin{equation}\label{ZR0R}
Z = R_0(t)^T \Delta R.
\end{equation}
 Let
\begin{equation}\label{def:Zs}
Z_s = \Sym (Z), \quad Z_k=\Skew (Z)
\end{equation}
such that 
\begin{equation}\label{def:Zk}
Z = Z_s + Z_k.
\end{equation}

\begin{lemma}
The system \eqref{rigid:tilde:ell} is transformed to
\begin{subequations}\label{Zs:Zk:Om:tv}
\begin{align}
\dot Z_s &= [Z_s, \hat \Omega_0] - 2\coV Z_s,\label{Zs:Zk:Om:tv:a}\\
\dot Z_k^\vee &= Z_k^\vee \times \Omega_0 + \Delta \Omega,  \label{Zs:Zk:Om:tv:b}\\
\Delta \dot \Omega &= \Delta u \label{Zs:Zk:Om:tv:c}
\end{align}
\end{subequations}
via the state transformation given in \eqref{ZR0R} -- \eqref{def:Zk}.
\end{lemma}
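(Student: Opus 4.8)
The plan is to compute $\dot Z$ directly from the definition $Z = R_0(t)^T\Delta R$ and then read off its symmetric and skew-symmetric parts. The key structural observation is that $Z_s$ and $Z_k$ are obtained from $Z$ by the constant linear operators $\Sym$ and $\Skew$, which commute with $d/dt$; hence $\dot Z_s = \Sym(\dot Z)$ and $\dot Z_k = \Skew(\dot Z)$, and it suffices to find one expression for $\dot Z$ and decompose it. First I would differentiate the product, $\dot Z = \dot R_0^T\Delta R + R_0^T\Delta\dot R$. Using the reference kinematics $\dot R_0 = R_0\hat\Omega_0$ from \eqref{ref:traj:rigid} gives $\dot R_0^T = -\hat\Omega_0 R_0^T$, so $\dot R_0^T\Delta R = -\hat\Omega_0 Z$. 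Substituting the first line of \eqref{rigid:tilde:ell} for $\Delta\dot R$ and repeatedly using $R_0\in\SO$, i.e.\ $R_0^TR_0 = I$, collapses the three resulting terms to $Z\hat\Omega_0 + \widehat{\Delta\Omega} - 2\coV Z_s$. Adding the two contributions I expect
\[
\dot Z = [Z,\hat\Omega_0] + \widehat{\Delta\Omega} - 2\coV Z_s,
\]
where $[Z,\hat\Omega_0] = Z\hat\Omega_0 - \hat\Omega_0 Z$.

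Next I would split this into symmetric and skew parts using $Z = Z_s + Z_k$ together with the commutator inclusions $[\Sym,\Skew]\subset\Sym$ and $[\Skew,\Skew]\subset\Skew$ recorded among the preliminary facts. Writing $[Z,\hat\Omega_0] = [Z_s,\hat\Omega_0] + [Z_k,\hat\Omega_0]$, the first inclusion places $[Z_s,\hat\Omega_0]$, together with $-2\coV Z_s$, in the symmetric part, while the second places $[Z_k,\hat\Omega_0]$, together with the skew matrix $\widehat{\Delta\Omega}$, in the skew part. Applying $\Sym$ then yields $\dot Z_s = [Z_s,\hat\Omega_0] - 2\coV Z_s$, which is \eqref{Zs:Zk:Om:tv:a}, and applying $\Skew$ yields $\dot Z_k = [Z_k,\hat\Omega_0] + \widehat{\Delta\Omega}$.

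Finally I would apply the vee map to the skew equation. Since $Z_k = (Z_k^\vee)^\wedge$, part 4 of Lemma \ref{lemma:prelim} gives $[Z_k,\hat\Omega_0] = (Z_k^\vee\times\Omega_0)^\wedge$, and $(\widehat{\Delta\Omega})^\vee = \Delta\Omega$, so $\dot Z_k^\vee = Z_k^\vee\times\Omega_0 + \Delta\Omega$, which is \eqref{Zs:Zk:Om:tv:b}; equation \eqref{Zs:Zk:Om:tv:c} is simply the second line of \eqref{rigid:tilde:ell}, left unchanged by the transformation. The computation is essentially routine, so the main obstacle is bookkeeping rather than ideas: one must verify via the closure relations that the commutator distributes cleanly so that no cross-terms mix the symmetric and skew blocks, and confirm that the damping term $-2\coV R_0\Sym(R_0^T\Delta R)$ reduces exactly to $-2\coV Z_s$ through orthogonality of $R_0$.
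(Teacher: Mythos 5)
Your proposal is correct and follows essentially the same route as the paper: differentiate $Z = R_0^T\Delta R$, use $\dot R_0^T = -\hat\Omega_0 R_0^T$ and the linearized dynamics to obtain $\dot Z = [Z,\hat\Omega_0] + \widehat{\Delta\Omega} - 2\coV Z_s$, then split into symmetric and skew parts via the commutator closure relations and apply the vee map with $[\hat u,\hat v] = (u\times v)^\wedge$. The paper's proof is exactly this computation, so there is nothing to add.
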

\begin{proof}
Differentiate both sides of \eqref{ZR0R}, and use \eqref{rigid:tilde:ell},   \eqref{ZR0R} -- \eqref{def:Zk}, and  \eqref{ref:traj:rigid}  to obtain
\begin{align*}
\dot Z &= \dot R_0^T \Delta R + R_0^T\Delta \dot R \\
&= -\hat\Omega_0R_0^T\Delta R + R_0^T \Delta R \hat \Omega_0 + \widehat {\Delta \Omega} - 2\coV \Sym(R_0^T\Delta R)\\
&= [Z, \hat \Omega_0] + \widehat {\Delta \Omega} - 2\coV \Sym(Z)\\
&= [Z_s,  \hat \Omega_0]  + [Z_k,  \hat \Omega_0] + \widehat {\Delta \Omega} - 2\coV Z_s.
\end{align*}
Taking the symmetric and skew-symmetric parts, we get
\begin{align*}
\dot Z_s =  [Z_s,  \hat \Omega_0]  - 2\coV  Z_s,\quad \dot Z_k =   [Z_k,  \hat \Omega_0] + \widehat {\Delta \Omega},
\end{align*}
where the second equation can be also written as \eqref{Zs:Zk:Om:tv:b}.
This completes the proof. 
\end{proof}

\begin{theorem}\label{theorem:delta:1}
For any two matrices $K_P, K_D \in \mathbb R^{3\times 3}$ such that the matrix in \eqref{point:stab:Hurwitz} becomes Hurwitz,  the linear controller
\begin{align}
\Delta u &= -K_P \cdot Z_k^\vee - K_D (Z_k^\vee \times \Omega_0 + \Delta \Omega )  \nonumber \\
&\quad\quad - (Z_k^\vee \times \Omega_0 + \Delta \Omega )\times \Omega_0 - Z_k^\vee \times u_0 \label{Delta:u:rigid}
\end{align}
exponentially stabilizes the origin for the system \eqref{Zs:Zk:Om:tv}. 
\end{theorem}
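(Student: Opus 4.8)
The plan is to exploit the block structure of system \eqref{Zs:Zk:Om:tv}: the symmetric part $Z_s$ evolves autonomously, while the pair $(Z_k^\vee, \Delta\Omega)$ is the only portion touched by the control. First I would dispatch the $Z_s$-subsystem \eqref{Zs:Zk:Om:tv:a} on its own, since it is decoupled from everything else. Using the Lyapunov candidate $\frac{1}{2}\|Z_s\|^2$ and differentiating along \eqref{Zs:Zk:Om:tv:a}, the commutator contribution is $\langle Z_s, [Z_s,\hat\Omega_0]\rangle$, which vanishes by the cyclic property of the trace because $Z_s$ is symmetric; what remains is $-2\coV\|Z_s\|^2$. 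Hence $\|Z_s(t)\|$ decays like $e^{-2\coV t}$, so this subsystem is exponentially stable irrespective of the control.

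The heart of the argument is that the controller \eqref{Delta:u:rigid} is engineered to linearize the $(Z_k^\vee, \Delta\Omega)$-dynamics exactly. Writing $y = Z_k^\vee$, equation \eqref{Zs:Zk:Om:tv:b} reads $\dot y = y\times\Omega_0 + \Delta\Omega$, so the parenthesized quantity $Z_k^\vee\times\Omega_0 + \Delta\Omega$ appearing throughout \eqref{Delta:u:rigid} is exactly $\dot y$. I would then differentiate \eqref{Zs:Zk:Om:tv:b} once more, using $\dot\Omega_0 = u_0$ from \eqref{ref:traj:rigid} and $\Delta\dot\Omega = \Delta u$ from \eqref{Zs:Zk:Om:tv:c}, to obtain $\ddot y = \dot y\times\Omega_0 + y\times u_0 + \Delta u$. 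Substituting \eqref{Delta:u:rigid} into this expression, the Coriolis-type terms $\dot y\times\Omega_0$ and $y\times u_0$ cancel against the corresponding terms in the feedback, collapsing the dynamics to the constant-coefficient second-order system $\ddot y + K_D\dot y + K_P y = 0$.

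At that point exponential stability of $(y,\dot y)$ is immediate: in the state $(y,\dot y)$ the system matrix is precisely \eqref{point:stab:Hurwitz}, which is Hurwitz by hypothesis, so both $y=Z_k^\vee$ and $\dot y$ decay exponentially. The final step is to recover exponential decay of $\Delta\Omega$ itself: from $\dot y = y\times\Omega_0 + \Delta\Omega$ I would solve $\Delta\Omega = \dot y - y\times\Omega_0$, and since $\Omega_0(t)$ is assumed bounded, the term $y\times\Omega_0$ is dominated by $\bigl(\sup_t\|\Omega_0(t)\|\bigr)\|y\|$ and hence also decays exponentially, forcing $\Delta\Omega\to 0$ exponentially. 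Combining the three pieces shows that the full state $(Z_s, Z_k^\vee, \Delta\Omega)$ converges exponentially to the origin.

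The main obstacle I anticipate is purely bookkeeping: verifying that every cross-product term generated in $\ddot y$ matches and cancels a term in \eqref{Delta:u:rigid}, which hinges on correctly reinterpreting $Z_k^\vee\times\Omega_0 + \Delta\Omega$ as $\dot y$ everywhere it appears in the feedback. The only genuinely nontrivial observation is the vanishing of $\langle Z_s,[Z_s,\hat\Omega_0]\rangle$, and the sole place where boundedness of the reference trajectory is actually invoked is the last recovery step for $\Delta\Omega$; everywhere else the cancellation is algebraic and exact.
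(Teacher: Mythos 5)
Your proposal is correct and follows essentially the same route as the paper's own proof: decouple and dispatch the $Z_s$-subsystem with the Lyapunov function $\tfrac{1}{2}\|Z_s\|^2$ (noting $\langle Z_s,[Z_s,\hat\Omega_0]\rangle=0$), recognize $Z_k^\vee\times\Omega_0+\Delta\Omega$ as $\dot Z_k^\vee$, differentiate \eqref{Zs:Zk:Om:tv:b} to reduce the controlled part to $\ddot Z_k^\vee+K_D\dot Z_k^\vee+K_PZ_k^\vee=0$, and then recover exponential decay of $\Delta\Omega$ from that of $(Z_k^\vee,\dot Z_k^\vee)$ using boundedness of $\Omega_0$. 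No gaps.
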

\begin{proof}
Let us first show exponential stability of the subsystem \eqref{Zs:Zk:Om:tv:a} that is decoupled from the rest of the system. Let $V(Z_s) = \frac{1}{2}\|Z_s\|^2$. Along the trajectory of  \eqref{Zs:Zk:Om:tv}, $\frac{d}{dt}V = \langle Z_s, [Z_s, \hat\Omega_0]  - 2\coV Z_s\rangle = -2\coV \|Z_s\|^2 = -4\coV V$, where it is easy to show $\langle Z_s, [Z_s, \hat\Omega_0]  \rangle = 0$. Hence, $V(t) \leq e^{-4\coV t}V(0)$ for all $t\geq 0$, or 
\begin{equation}\label{Zs:exp}
\|Z_s(t)\| \leq e^{-2\coV t} \|Z_s(0)\|
\end{equation} 
for all $t\geq 0$ and $Z_s(0) \in \Sym(\mathbb R^{3\times 3})$, which proves exponential stability of $Z_s = 0$ for  \eqref{Zs:Zk:Om:tv:a}.

Differentiating \eqref{Zs:Zk:Om:tv:b} and substituting \eqref{Zs:Zk:Om:tv:c}  transforms the subsystem \eqref{Zs:Zk:Om:tv:b} and \eqref{Zs:Zk:Om:tv:c} to the following second-order system:
\[
\ddot Z_k^\vee = \dot Z_k^\vee \times \Omega_0 +Z _k^\vee \times u_0 + \Delta u
\]
since $\dot \Omega (t) = u_0(t)$. This second-order system
 is exponentially stabilized by the controller
\begin{equation}\label{Delta:u:another:form}
\Delta u = -K_P \cdot Z_k^\vee - K_D \dot Z_k^\vee -  \dot Z_k^\vee \times \Omega_0 - Z_k^\vee \times u_0,
\end{equation}
where the matrices $K_P, K_D \in \mathbb R^{3\times 3}$ are any matrices such that the matrix in \eqref{point:stab:Hurwitz} becomes Hurwitz. So, there are positive constants $C_1$ and $C_2$ such that 
\[
\|Z_k^\vee (t)\| + \|\dot Z_k^\vee (t)\| \leq C_1e^{-C_2t} (\|Z_k^\vee (0)\| + \|\dot Z_k^\vee (0)\| )
\]
for all $t\geq 0$ and $(Z_k^\vee (0), \dot Z_k^\vee (0)) \in \mathbb R^{3} \times  \mathbb R^{3}$.
Since $\Omega_0(t)$ is bounded by assumption, there is a constant $M>0$ such that $ \|\Omega_0(t)\| \leq M$ for all $t\geq0$. By  \eqref{Zs:Zk:Om:tv:b} and the triangle inequality,
\[
\|\dot Z_k^\vee (t) \| \leq   M\|Z_k^\vee (t)\| + \| \Delta \Omega (t)\|
\]
and
\[
 \|\Delta \Omega (t)\| \leq \| \dot Z_k^\vee (t)\| + M \|Z_k^\vee (t)\| 
\]
for all $t\geq 0$. It is then easy to show that
\begin{align}
\|Z_k^\vee (t)\| &+ \|\Delta \Omega (t)\| \nonumber \\
&\leq C_3e^{-C_2t}(\|Z_k^\vee (0)\| + \|\ \Delta \Omega (0)\|)  \label{Zk:Om:exp}
\end{align}
for all $t\geq 0$ and $(Z_k^\vee (0), \Delta \Omega (0))\in \mathbb R^{3} \times  \mathbb R^{3}$, where $C_3 = C_1(1+M)^2$. Notice that the controller given in \eqref{Delta:u:another:form} is the same as that in \eqref{Delta:u:rigid}. Hence,  it follows  
from \eqref{Zs:exp} and \eqref{Zk:Om:exp} that the controller  \eqref{Delta:u:rigid} exponentially stabilizes the origin for the system \eqref{Zs:Zk:Om:tv}. 
\end{proof}

\begin{theorem}\label{theorem:delta:2}
For any three matrices $K_P, K_D, K_I \in \mathbb R^{3\times 3}$ such that the polynomial in \eqref{PID:poly} becomes Hurwitz,  the linear controller
\begin{align}
\Delta u &= -K_P \cdot Z_k^\vee - K_D (Z_k^\vee \times \Omega_0 + \Delta \Omega )  \nonumber \\
&\quad\quad - K_I\int_0^t   Z_k^\vee (\tau) d\tau  - (Z_k^\vee \times \Omega_0 + \Delta \Omega )\times \Omega_0 \nonumber \\
&\quad\quad- Z_k^\vee \times u_0 \label{Delta:u:rigid:PID}
\end{align}
exponentially stabilizes the origin for the linear time-varying system \eqref{Zs:Zk:Om:tv}. 
\end{theorem}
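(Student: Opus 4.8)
The plan is to mirror the proof of Theorem \ref{theorem:delta:1}, grafting onto it the integral-term manipulation already used in Theorem \ref{theorem:PID:point:stab}. The decisive observation is that all time-varying terms in the closed loop will cancel against the feedforward terms in \eqref{Delta:u:rigid:PID}, so that the problem collapses to a single autonomous third-order linear ODE whose stability is governed purely by the Hurwitz condition \eqref{PID:poly}.

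First I would dispose of the $Z_s$ subsystem \eqref{Zs:Zk:Om:tv:a}, which is untouched by the controller and decoupled from the rest; the Lyapunov argument used for \eqref{Zs:exp} applies verbatim and again yields $\|Z_s(t)\| \le e^{-2\coV t}\|Z_s(0)\|$. Turning to the $(Z_k,\Delta\Omega)$ part, I would use \eqref{Zs:Zk:Om:tv:b} to substitute $\dot Z_k^\vee = Z_k^\vee \times \Omega_0 + \Delta\Omega$, which rewrites the controller \eqref{Delta:u:rigid:PID} as
\[
\Delta u = -K_P Z_k^\vee - K_D \dot Z_k^\vee - K_I \int_0^t Z_k^\vee(\tau)\,d\tau - \dot Z_k^\vee \times \Omega_0 - Z_k^\vee \times u_0.
\]
As in Theorem \ref{theorem:delta:1}, differentiating \eqref{Zs:Zk:Om:tv:b} and inserting \eqref{Zs:Zk:Om:tv:c} produces the second-order system $\ddot Z_k^\vee = \dot Z_k^\vee \times \Omega_0 + Z_k^\vee \times u_0 + \Delta u$. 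Substituting the rewritten controller, the two feedforward cross-product terms cancel exactly, leaving $\ddot Z_k^\vee + K_D \dot Z_k^\vee + K_P Z_k^\vee + K_I \int_0^t Z_k^\vee(\tau)\,d\tau = 0$. A further differentiation eliminates the integral and yields the autonomous third-order system $\dddot Z_k^\vee + K_D \ddot Z_k^\vee + K_P \dot Z_k^\vee + K_I Z_k^\vee = 0$, which is precisely the system appearing in Theorem \ref{theorem:PID:point:stab} and is exponentially stable by the Hurwitz hypothesis \eqref{PID:poly}. Hence $Z_k^\vee$ and $\dot Z_k^\vee$ decay exponentially.

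Finally I would transfer this decay back to the physical variable $\Delta\Omega$. From $\Delta\Omega = \dot Z_k^\vee - Z_k^\vee \times \Omega_0$ together with the boundedness $\|\Omega_0(t)\| \le M$, the same triangle-inequality estimate that produced \eqref{Zk:Om:exp} shows that $\|Z_k^\vee(t)\| + \|\Delta\Omega(t)\|$ is exponentially bounded. Combining this with the $Z_s$ estimate then gives exponential stability of the origin for the full system \eqref{Zs:Zk:Om:tv}.

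I expect the only delicate point to be the bookkeeping around the integrator state: one must check that the reduction to a third-order ODE legitimately captures the exponential decay of the integral term $\int_0^t Z_k^\vee\,d\tau$ as well. This follows because the intermediate second-order identity expresses $K_I \int_0^t Z_k^\vee\,d\tau$ as a linear combination of $Z_k^\vee$, $\dot Z_k^\vee$, and $\ddot Z_k^\vee$, each of which decays exponentially; hence no separate argument for the augmented state is needed. Everything else is a routine adaptation of the two proofs already in hand.
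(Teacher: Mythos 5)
Your proof is correct; the paper actually states this theorem without supplying a proof, and your argument is precisely the intended one---grafting the integral-term differentiation from Theorem \ref{theorem:PID:point:stab} onto the cancellation-and-reduction scheme of Theorem \ref{theorem:delta:1}. The cross-product feedforward terms do cancel exactly as you claim, the reduction to $\dddot Z_k^\vee + K_D \ddot Z_k^\vee + K_P \dot Z_k^\vee + K_I Z_k^\vee = 0$ is legitimate (with $\ddot Z_k^\vee(0)$ expressible in terms of $Z_k^\vee(0)$ and $\Delta\Omega(0)$, so the exponential bound is indeed in terms of the original initial state), and the transfer back to $\Delta\Omega$ via boundedness of $\Omega_0$ mirrors the estimate \eqref{Zk:Om:exp}.
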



\begin{theorem}\label{theorem:delta:3}
For any positive number $k_P$ and any positive definite symmetric matrix $K_D \in \mathbb R^{3\times 3}$, the PD controller
\begin{equation}\label{delta:u:kpd}
\Delta u = -k_PZ_k^\vee - K_D \Delta \Omega
\end{equation}
exponentially stabilizes the origin for the  system \eqref{Zs:Zk:Om:tv}. 
\end{theorem}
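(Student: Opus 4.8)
The plan is to exploit the cascade structure of \eqref{Zs:Zk:Om:tv}. The symmetric block $Z_s$ obeys the autonomous equation \eqref{Zs:Zk:Om:tv:a}, which is untouched by the controller \eqref{delta:u:kpd} and was already shown to decay as \eqref{Zs:exp} in the proof of Theorem~\ref{theorem:delta:1}; hence it suffices to establish exponential stability of the origin for the coupled subsystem \eqref{Zs:Zk:Om:tv:b}--\eqref{Zs:Zk:Om:tv:c} in closed loop with \eqref{delta:u:kpd}. Writing $p=Z_k^\vee$ and $v=\Delta\Omega$ for brevity, substitution of \eqref{delta:u:kpd} gives the linear time-varying system
\[
\dot p = p\times\Omega_0 + v, \qquad \dot v = -k_P\,p - K_D v .
\]
Since the reference angular velocity $\Omega_0=\Omega_0(t)$ sits in the coefficients, an eigenvalue argument as in Theorem~\ref{theorem:linear:rigid:stab} is unavailable, so I would proceed through a Lyapunov function, using only that $\Omega_0(t)$ is bounded, say $\|\Omega_0(t)\|\le M$.

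First I would test the mechanical energy $V_0=\tfrac12 k_P\|p\|^2+\tfrac12\|v\|^2$. The gyroscopic coupling is passive, $\langle p,\,p\times\Omega_0\rangle=0$, which is precisely why the simpler controller \eqref{delta:u:kpd} can dispense with the feedforward cancellation terms present in \eqref{Delta:u:rigid}; a one-line computation then yields $\dot V_0=-\langle v,K_D v\rangle\le 0$. This delivers uniform stability and boundedness but not attractivity, because $\dot V_0$ vanishes on the entire set $\{v=0\}$ and, the system being time-varying, LaSalle's invariance principle cannot be invoked to recover decay of $p$.

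To force strict decay I would add the standard cross term, $V=V_0+\epsilon\langle p,v\rangle$ with a small $\epsilon>0$. Viewed as a quadratic form in $(p,v)\in\mathbb R^6$, $V$ has coefficient matrix $\begin{bmatrix}k_P I & \epsilon I\\ \epsilon I & I\end{bmatrix}$, which is positive definite exactly when $\epsilon<\sqrt{k_P}$, so $V$ is an admissible Lyapunov candidate for small $\epsilon$. Differentiating along the flow and again discarding $\langle p,\,p\times\Omega_0\rangle=0$ produces
\[
\dot V = -\langle v,K_D v\rangle + \epsilon\big(\|v\|^2 - k_P\|p\|^2 + \langle p\times\Omega_0,\,v\rangle - \langle p,K_D v\rangle\big).
\]
Bounding the two indefinite inner products by $M\|p\|\|v\|$ and $\|K_D\|\,\|p\|\|v\|$ and using $\langle v,K_D v\rangle\ge\lambda_{\min}(K_D)\|v\|^2$, I would dominate $\dot V$ by the quadratic form in $(\|p\|,\|v\|)$ with diagonal entries $-\epsilon k_P$ and $-(\lambda_{\min}(K_D)-\epsilon)$ and off-diagonal coupling of order $\epsilon$.

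The crux --- and the step I expect to demand the most care --- is to certify that this last quadratic form is negative definite. Both the stabilizing $p$-term $-\epsilon k_P\|p\|^2$ and the cross coupling are of order $\epsilon$, so one cannot simply take $\epsilon$ large; the saving feature is that the $v$-damping coefficient $\lambda_{\min}(K_D)-\epsilon$ stays bounded away from $0$ as $\epsilon\to0$, so the completion-of-squares condition
\[
\epsilon\,(M+\|K_D\|)^2 < 4\,k_P\,(\lambda_{\min}(K_D)-\epsilon)
\]
holds for every sufficiently small $\epsilon$. Fixing such an $\epsilon$ gives $\dot V\le -c(\|p\|^2+\|v\|^2)$ for some $c>0$, hence $\dot V\le-\alpha V$, proving exponential stability of $(p,v)=0$. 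Combining this with the exponential decay \eqref{Zs:exp} of $Z_s$ then yields exponential stability of the origin for the full system \eqref{Zs:Zk:Om:tv}, as claimed.
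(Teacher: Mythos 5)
Your proof is correct and follows essentially the same route as the paper, which simply points to the Lyapunov function $V = \tfrac12\|Z_s\|^2 + \tfrac{k_P}{2}\|Z_k^\vee\|^2 + \tfrac12\|\Delta\Omega\|^2 + \epsilon\langle Z_k^\vee,\Delta\Omega\rangle$ and the arguments of \cite{LeLeMc13}; you use the identical cross-term Lyapunov function (merely handling the decoupled $Z_s$ equation separately via \eqref{Zs:exp} instead of folding $\tfrac12\|Z_s\|^2$ into $V$) and correctly carry out the small-$\epsilon$ completion-of-squares that the paper leaves implicit.
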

\begin{proof}
It is straightforward to prove this theorem with the Lyapunov function
\[
V = \frac{1}{2}\|Z_s\|^2 + \frac{k_P}{2} \|Z_k^\vee\|^2 + \frac{1}{2}\|\Delta \Omega\|^2 + \epsilon \langle Z_k^\vee,\Delta  \Omega  \rangle
\]
and the Lyapunov arguments used in \cite{LeLeMc13}. 
\end{proof}

The following theorem is a variant of Theorem \ref{theorem:delta:3}.
\begin{theorem}\label{theorem:delta:4}
For any two positive numbers $k_P$ and $\epsilon$ and any positive definite symmetric matrix $K_D \in \mathbb R^{3\times 3}$ such that
\[
0 < \epsilon < \min \left \{ \sqrt{k_P}, \frac{4k_P\lambda_{\min}(K_D)}{4k_P + (\lambda_{\max}(K_D))^2}\right \},
\]
 the controller
\begin{equation}\label{delta:u:kpd:Omega0}
\Delta u = -k_P Z_k^\vee - K_D \Delta \Omega - \epsilon (Z_k^\vee \times \Omega_0)
\end{equation}
exponentially stabilizes the origin for the system \eqref{Zs:Zk:Om:tv}. 

\end{theorem}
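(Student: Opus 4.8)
The plan is to follow the Lyapunov strategy of Theorem \ref{theorem:delta:3}, exploiting the fact that the $Z_s$-equation \eqref{Zs:Zk:Om:tv:a} is decoupled from the rest. Its exponential stability was already established in \eqref{Zs:exp}, so it suffices to prove exponential stability of the origin for the $(Z_k^\vee,\Delta\Omega)$-subsystem obtained by substituting the controller \eqref{delta:u:kpd:Omega0} into \eqref{Zs:Zk:Om:tv:b}--\eqref{Zs:Zk:Om:tv:c}, namely
\begin{align*}
\dot Z_k^\vee &= Z_k^\vee \times \Omega_0 + \Delta \Omega,\\
\Delta\dot\Omega &= -k_P Z_k^\vee - K_D\Delta\Omega - \epsilon(Z_k^\vee\times\Omega_0).
\end{align*}

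First I would take the Lyapunov candidate
\[
W = \frac{k_P}{2}\|Z_k^\vee\|^2 + \frac{1}{2}\|\Delta\Omega\|^2 + \epsilon\langle Z_k^\vee, \Delta\Omega\rangle,
\]
which is exactly the part of the function used in Theorem \ref{theorem:delta:3} that does not involve $Z_s$. Written as a quadratic form in $(Z_k^\vee,\Delta\Omega)$, its $2\times 2$ coefficient block $\left[\begin{smallmatrix} k_P/2 & \epsilon/2 \\ \epsilon/2 & 1/2\end{smallmatrix}\right]$ is positive definite precisely when $\epsilon^2<k_P$, i.e. $\epsilon<\sqrt{k_P}$, which accounts for the first term in the stated bound on $\epsilon$. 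Thus there exist positive constants $c_1,c_2$ with $c_1\|(Z_k^\vee,\Delta\Omega)\|^2\leq W\leq c_2\|(Z_k^\vee,\Delta\Omega)\|^2$.

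The crux is the computation of $\dot W$ along the subsystem. Using $\langle Z_k^\vee, Z_k^\vee\times\Omega_0\rangle=0$ and the symmetry of the inner product, the two $k_P\langle Z_k^\vee,\Delta\Omega\rangle$ contributions cancel, and -- this is the whole point of the extra feedback term $-\epsilon(Z_k^\vee\times\Omega_0)$ -- the two $\Omega_0$-dependent cross terms $\pm\epsilon\langle Z_k^\vee\times\Omega_0,\Delta\Omega\rangle$ cancel as well. What survives is the time-invariant form
\[
\dot W = -\epsilon k_P\|Z_k^\vee\|^2 - \langle\Delta\Omega, K_D\Delta\Omega\rangle + \epsilon\|\Delta\Omega\|^2 - \epsilon\langle Z_k^\vee, K_D\Delta\Omega\rangle.
\]
The disappearance of every $\Omega_0$-term is the key step, and it is what lets me avoid invoking boundedness of $\Omega_0$ (in contrast with Theorem \ref{theorem:delta:1}); I expect the main care to be needed in verifying these cancellations and in matching the resulting constraint to the stated bound.

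To finish, bound $\langle\Delta\Omega, K_D\Delta\Omega\rangle\geq\lambda_{\min}(K_D)\|\Delta\Omega\|^2$ and $|\langle Z_k^\vee, K_D\Delta\Omega\rangle|\leq\lambda_{\max}(K_D)\|Z_k^\vee\|\|\Delta\Omega\|$, so that $-\dot W$ dominates the quadratic form in $(\|Z_k^\vee\|,\|\Delta\Omega\|)$ with matrix $\left[\begin{smallmatrix} \epsilon k_P & -\epsilon\lambda_{\max}(K_D)/2 \\ -\epsilon\lambda_{\max}(K_D)/2 & \lambda_{\min}(K_D)-\epsilon\end{smallmatrix}\right]$. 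Requiring this matrix to be positive definite, its $(1,1)$ entry is positive and its determinant condition reduces, after dividing by $\epsilon>0$, to $\epsilon < 4k_P\lambda_{\min}(K_D)/(4k_P+(\lambda_{\max}(K_D))^2)$, the second term in the stated bound; since this forces $\epsilon<\lambda_{\min}(K_D)$, the $(2,2)$ entry is automatically positive. Hence $\dot W\leq -c_3\|(Z_k^\vee,\Delta\Omega)\|^2\leq -(c_3/c_2)W$, giving exponential decay of $(Z_k^\vee,\Delta\Omega)$; combined with \eqref{Zs:exp} this yields exponential stability of the origin for the full system \eqref{Zs:Zk:Om:tv}.
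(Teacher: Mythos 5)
Your proof is correct and is essentially the argument the paper intends: the paper gives no explicit proof of this theorem, merely calling it a variant of Theorem \ref{theorem:delta:3}, whose own proof sketch points to the same Lyapunov function (with the $\frac{1}{2}\|Z_s\|^2$ term that you instead handle separately via \eqref{Zs:exp}). Your computation --- in particular the cancellation of every $\Omega_0$-dependent term thanks to the extra $-\epsilon (Z_k^\vee \times \Omega_0)$ feedback, and the determinant condition that reproduces the stated bound $\epsilon < 4k_P\lambda_{\min}(K_D)/(4k_P + (\lambda_{\max}(K_D))^2)$ exactly --- supplies precisely the details the paper omits.
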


The following theorem puts together the four preceding theorems to provide tracking controllers for the rigid body system \eqref{rigid:eq}.
\begin{theorem}
Consider the following controller
\begin{equation}\label{THE:tracking:control:rigid}
u = u_0 + \Delta u,
\end{equation}
where $\Delta u$ is any of \eqref{Delta:u:rigid},  \eqref{Delta:u:rigid:PID}, \eqref{delta:u:kpd} and \eqref{delta:u:kpd:Omega0} with 
\[
Z_k = \Skew (R_0^T\Delta R)^\vee = \Skew(R_0^TR)^\vee.
\]
 Then, it enables the free rigid body system given in  \eqref{rigid:eq} to track the reference trajectory $(R_0(t), \Omega_0(t))$ exponentially.
\end{theorem}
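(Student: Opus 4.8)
The plan is to chain the four preceding stabilization results through the state transformation and then through Theorem~\ref{theorem:Khalil}. First I would observe that the transformation $Z = R_0(t)^T \Delta R$ in \eqref{ZR0R}, frozen at each time $t$, is an isometry of $\mathbb R^{3\times 3}$ because $R_0(t) \in \SO$ (Lemma~\ref{lemma:prelim}, part~1), so $\|Z\| = \|\Delta R\|$ for all $t$. Consequently, the exponential decay of $(Z_s, Z_k^\vee, \Delta\Omega)$ to the origin that each of Theorems~\ref{theorem:delta:1}--\ref{theorem:delta:4} guarantees for the transformed system \eqref{Zs:Zk:Om:tv} is equivalent to exponential decay of $(\Delta R, \Delta\Omega)$ to the origin for the linearized ambient system \eqref{rigid:tilde:ell}. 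In other words, each of the four controllers yields a time-varying linear feedback $\Delta u = -K(t)\Delta x$, with $\Delta x = (\Delta R, \Delta\Omega)$, that exponentially stabilizes the origin for the linearization $\tilde\Sigma^\ell_{\rm t}$ of the ambient rigid body system \eqref{rigid:tilde:eq}. Here $K(t)$ depends only on the bounded quantities $R_0(t)$, $\Omega_0(t)$ and $u_0(t)$, so $K(t)$ is itself bounded over $[0,\infty)$.

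Next I would verify the hypotheses of Theorem~\ref{theorem:Khalil} for the ambient vector field $\tilde X$ of \eqref{rigid:tilde:eq}. The residual $f(z,t) = \tilde X(x_0(t)+z,\, u_0(t) - K(t)z) - \tilde X(x_0(t), u_0(t))$ must have a derivative $\partial f/\partial z$ that is bounded and Lipschitz on a ball $B_r$ uniformly in $t$. The key structural fact is that $\tilde X$ for the rigid body is a polynomial map of $(R,\Omega)$ that is affine in $u$: the term $R\hat\Omega$ is bilinear and $\coV R(R^TR - I)$ is cubic in $R$. Hence $\partial f/\partial z$ is a polynomial in $z$ whose coefficients are polynomials in $(R_0(t), \Omega_0(t), u_0(t))$ and in the bounded gain $K(t)$. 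Since these data are bounded over $[0,\infty)$ by assumption and $z$ ranges over the bounded ball $B_r$, both $\partial f/\partial z$ and its further $z$-derivative are bounded on $B_r$ uniformly in $t$, which supplies the required uniform boundedness and Lipschitz estimates.

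With these hypotheses in hand, Theorem~\ref{theorem:Khalil} shows that $u = u_0 - K(t)\Delta x$ makes the restricted system $\Sigma|M$ on $M = \SO \times \mathbb R^3$ track $(R_0(t),\Omega_0(t))$ exponentially; and since $\Sigma|M = \tilde\Sigma|M$ coincides with the rigid body system \eqref{rigid:eq} on $\SO \times \mathbb R^3$, this is exactly the claimed conclusion for \eqref{rigid:eq}. The controller expressions are obtained by substituting $Z_k^\vee = \Skew(R_0^TR)^\vee$, which, as remarked earlier, sidesteps the ill-defined subtraction $\Delta R = R - R_0$ on $\SO$. For the three static controllers \eqref{Delta:u:rigid}, \eqref{delta:u:kpd} and \eqref{delta:u:kpd:Omega0} this completes the argument.

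The main obstacle is the PID controller \eqref{Delta:u:rigid:PID} of Theorem~\ref{theorem:delta:2}, since it is dynamic rather than static feedback and therefore does not directly fit the static form $\Delta u = -K(t)\Delta x$ assumed in Theorem~\ref{theorem:Khalil}. I would handle it exactly as in the proof of Theorem~\ref{theorem:PID:point:stab}: augment the state with the integrator variable $\int_0^t Z_k^\vee(\tau)\,d\tau$, view the closed loop as a linear time-varying system on the enlarged state space, and reverify that the enlarged residual has a $z$-derivative that is bounded and Lipschitz uniformly in $t$. This again follows from the polynomial structure of $\tilde X$ together with the boundedness of the reference data, so the same chain of reasoning then applies to the augmented system and transfers exponential tracking back to \eqref{rigid:eq}.
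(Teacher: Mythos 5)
Your proposal is correct and follows exactly the route the paper intends: the paper states this theorem without proof, introducing it only as the result that ``puts together the four preceding theorems,'' and your argument supplies the missing assembly---the isometry $\|Z\|=\|\Delta R\|$ from Lemma~\ref{lemma:prelim} to transfer exponential stability from \eqref{Zs:Zk:Om:tv} back to the linearization $\tilde\Sigma^\ell_{\rm t}$, the verification of the boundedness/Lipschitz hypotheses of Theorem~\ref{theorem:Khalil} from the polynomial structure of $\tilde X$ and the boundedness of the reference data, and the state augmentation for the dynamic PID case in the spirit of Theorem~\ref{theorem:PID:point:stab}. Nothing further is needed.
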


We carry out a simulation to show an excellent tracking performance of the controller  \eqref{THE:tracking:control:rigid} and \eqref{delta:u:kpd:Omega0}  for the rigid body system \eqref{rigid:eq} or  \eqref{rigid:tilde:eq} with $\coV =1$.
The control parameters are chosen as 
\[
k_P=4, \quad K_D = 2I, \quad \epsilon = 1.
\]
The reference trajectory $(R_0(t), \Omega_0(t)) \in \SO \times \mathbb R^3$ with the reference control signal $u_0(t) \in \mathbb R^3$ are chosen as 
\begin{align*}
R_0(t)  &\\
&\!\!\! \!\!\!\!\!\! \!\!\!\!{\footnotesize = \begin{bmatrix}
\cos^2 t & - \sin t & \cos t\sin t \\
\sin^2 t + \cos^2 t\sin t & \cos^2 t & \cos t \sin^2 t - \cos t \sin t \\
\cos t \sin^2t - \cos t \sin t & \cos t \sin t & \cos^2t + \sin^3t 
\end{bmatrix}}\\
\Omega_0 (t)&= \begin{bmatrix}
\cos^2 t - \sin t \\ 1-\sin t \\ \cos t (1+  \sin t)
\end{bmatrix}, \\
 u_0(t) &= 
 \begin{bmatrix}
-2\cos t \sin t - \cos t \\ -\cos t \\ -\sin t (1+\sin t) + \cos^2 t
\end{bmatrix},
\end{align*}
which satisfy \eqref{ref:traj:rigid}. The initial condition is given by
\[
R(0) = \exp (0.99\pi \hat e_2), \quad \Omega (0) =  (1,1,1),
\]
where $R(0)$ is a rotation around $e_2 = (0,1,0)$ through $0.99\pi$ radians. The initial orientation tracking error is almost $2\sqrt 2$. The tracking errors  are plotted in Fig. \ref{figure.tracking:rigid}, which shows the excellent tracking performance of the linear controller for the nonlinear system \eqref{rigid:eq}.

\begin{figure}[tb]
\begin{center}
\includegraphics[scale = 0.37]{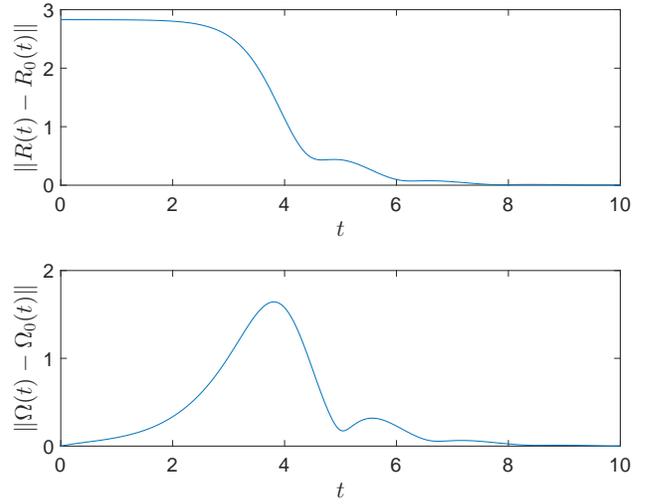}
\end{center}
\caption{\label{figure.tracking:rigid}  The simulation result of tracking the reference $(R_0(t), \Omega_0(t))$ by the linear controller  \eqref{THE:tracking:control:rigid} with \eqref{delta:u:kpd:Omega0}  for the rigid body system \eqref{rigid:eq}.}
\end{figure}

\section{Future Work}
We plan to extend our program to the design of observers and filters. 
%


\end{document}